\newtheorem{thm}{Theorem}[section]
\newtheorem{cor}[thm]{Corollary}
\newtheorem{lem}[thm]{Lemma}
\newtheorem{notation}[thm]{Notation}
\newtheorem{prop}[thm]{Proposition}
\theoremstyle{definition}
\theoremstyle{definition}
\theoremstyle{remark}
\newtheorem*{rem}{Remark}
\newcommand{\C}{{\mathbb C}}
\newcommand{\Z}{{\mathbb Z}}
\newcommand{\IR}{\mathbb{R}}
\newcommand{\IQ}{\mathbb{Q}}
\newcommand{\IZ}{\mathbb{Z}}
\newcommand{\IN}{\mathbb{N}}
\begin{document}

\title{Constructing free actions of p-groups on products of spheres.}

\author[Michele Klaus.]
{Michele Klaus.}
\address{Department of Mathematics
University of British Columbia, Vancouver BC V6T 1Z2, Canada}
\email{michele@math.ubc.ca}

\maketitle

\begin{abstract}
We prove that, for p an odd prime, every finite p-group
of rank 3 acts freely on a finite complex $X$ homotopy equivalent
to a product of three spheres.
\end{abstract}

\tableofcontents

\date{\today}

\section{Introduction.}

The origin of the study of group actions on spheres goes back
to Hopf and the spherical space form problem,
which asked for a classification of finite groups that can act freely on some sphere.
The first result was due to P.A. Smith \cite{smith}, who
showed that if a finite group $G$ acts freely on a sphere, then it
has periodic cohomology. Later Milnor \cite{milnor} gave a second necessary condition:
If a finite group $G$ acts freely on a sphere, then any element in $G$
of order 2 must be central. Finally Thomas, Wall and Madsen \cite{wall} were able to prove
the these two necessary conditions were in fact sufficient:
a finite group $G$ acts freely on a sphere if and only if it has
periodic cohomology and all involutions are central.

From the homotopy point of view, Swan \cite{swan} proved that a
finite group has periodic cohomology if and only if it acts freely
on a finite dimensional CW-complex homotopy equivalent to a sphere.
It is a classical result that a finite group has periodic cohomology
if and only if all of its abelian p-subgroups are cyclic. Based on
that and on their own algebraic results, Benson and Carlson
\cite{ben1} suggested the rank conjecture: for any finite group $G$
we have that $rk(G)=hrk(G)$, where $hrk(G)=min\left\{k\in\IN| \,\
\textit{G acts freely on a finite dimensional CW-complex} \,\
X\simeq S^{n_1}\times...\times S^{n_k} \right\}$ is the homotopy
rank of $G$. With this notation, Swan's result says that $rk(G)=1$
if and only if $hrk(G)=1$. In the same period Heller \cite{heller}
showed that $(\IZ_p)^3$ cannot act freely on a finite dimensional
CW-complex homotopy equivalent to a product of two spheres. More
recently Adem and Smith \cite{adem1} showed that if $rk(G)=2$ then
$hrk(G)=2$ for $G$ a p-group or $G$ a simple group (different from
$PSL_3(\mathbb{F}_p)$).
Adem \cite{adem3} proved also that every odd order rank two group
acts freely an a finite CW-complex $X\simeq S^n\times S^m$. This
also follows from a more general result of Jackson \cite{jackson2}.
Our main theorem is the following:

\begin{thm} \label{mainthm}
For p an odd prime, every finite p-group of rank 3
acts freely on a finite CW-complex homotopy equivalent
to the product of three spheres.
\end{thm}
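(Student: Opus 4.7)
The plan is to construct $X$ as an iterated $G$-equivariant sphere bundle, following the rank-reduction scheme developed by Adem--Smith and Jackson. Concretely, one builds a tower of spherical fibrations $X_3 \to X_2 \to X_1 \to BG$ classified by cohomology classes $\alpha_i \in H^{n_i}(X_{i-1}; \IZ)$, and then pulls this tower back along $EG \to BG$ to obtain a $G$-CW complex $\widetilde X_3$ homotopy equivalent to $S^{n_1-1}\times S^{n_2-1}\times S^{n_3-1}$. The task reduces to choosing the classes $\alpha_i$ so that the resulting $G$-action on $\widetilde X_3$ is free, and to verifying afterwards that one can arrange $\widetilde X_3$ to be a finite complex.

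The freeness condition translates via Smith theory into a non-vanishing criterion on Euler classes: for every non-trivial elementary abelian $p$-subgroup $E \le G$, the cohomology classes of the pulled-back tower must have nonzero restriction to $H^{\ast}(BE; \IF_p)$ in a sense which forces any such $E$ to act freely on $\widetilde X_3$. A standard inductive argument shows that it suffices, at stage $i$, to produce a complex $G$-representation $V_i$ whose Euler class $e(V_i)$ restricts non-trivially to $H^{\ast}(BE; \IF_p)$ for every elementary abelian $E$ of rank $i$ that still appears as an isotropy group after stages $i+1, \ldots, 3$. Since $rk(G) = 3$, three such stages (carried out in the order $i = 3, 2, 1$) suffice to eliminate all isotropy.

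The main obstacle is the construction of $V_3$: one must produce a single $G$-representation whose Euler class restricts non-trivially to $BE$ for every conjugacy class of maximal elementary abelian $E \cong (\IZ/p)^3 \le G$. The standard technique is to induce representations from normalizers or centralizers of suitable subgroups (often via faithful irreducibles of extraspecial $p$-quotients), to apply Quillen's theorem to control the restriction to $BE$, and to take a direct sum over a set of conjugacy class representatives. The odd-prime hypothesis enters crucially here, since the representation-theoretic flexibility needed to detect every rank-$3$ elementary abelian relies on properties of extraspecial $p$-groups that fail at $p = 2$. Stages $i = 2$ and $i = 1$ are structurally analogous but significantly easier, because the pool of elementary abelian subgroups left to annihilate at those stages is already constrained by the action constructed at the previous stage, and so one can work with representations of proper subgroups of $G$.

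The remaining technical work is then twofold: first, to assemble a detailed case analysis of the possible extension structures of a rank-$3$ $p$-group (drawing on the classification of such groups as central extensions of extraspecial $p$-groups) in order to actually exhibit the $V_i$; and second, to replace the resulting infinite-dimensional Borel construction by a finite $G$-CW model, which can be done by Swan's finiteness machinery together with a Postnikov truncation argument as in the Adem--Smith setting.
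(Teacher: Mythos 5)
Your overall architecture (three successive sphere fibrations, each reducing the rank of the isotropy by one, with Adem--Smith finishing the free stage) matches the paper's, but the proposal has a genuine gap at the decisive step, and it also misidentifies where the difficulty lies. First, the freeness criterion is stated too weakly: a \emph{nonzero} restriction of $e(V)$ to $H^{\ast}(BE;\IF_p)$ does not prevent $E$ from fixing a point of the associated sphere; what is needed is that $V\arrowvert_E$ be fixed-point free, equivalently that $e(V)\arrowvert_{BE}$ be non-nilpotent. Second, the stage you present as the main obstacle ($i=3$) is in fact the easy one: every maximal elementary abelian subgroup of $G$ contains $\Omega_1(Z(G))$, so (when $Z(G)$ is cyclic) any representation induced from a free representation of the central subgroup of order $p$ already has no rank-$3$ isotropy, and when $rk(Z(G))\geq 2$ one induces from a free representation of $(\IZ/p)^2<Z(G)$. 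The genuinely hard stage is $i=2$: the surviving isotropy subgroups are exactly the subgroups $H$ with $H\cap Z(G)=1$, and for a general rank-$3$ $p$-group there is \emph{no single} $G$-representation that is fixed-point free on all the rank-$2$ elementary abelians among them --- this is precisely the obstruction that makes the theorem nontrivial. Inducing from normalizers or centralizers and summing over conjugacy classes does not repair this, because induced representations acquire trivial summands on the other elementary abelians; and there is no classification of rank-$3$ $p$-groups "as central extensions of extraspecial groups" to appeal to. As written, the proposal assumes away the hardest point.

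The paper circumvents the nonexistence of a global $V_2$ as follows. After passing to the sphere induced from the center, it classifies the subgroups meeting $Z(G)$ trivially (Proposition \ref{class}: cyclic, contained in $C_G(Q)$, abelian of type $(p,p^{n-1})$, or $M(p^n)$, where $Q$ is a normal $(p,p)$-subgroup meeting the center), and then constructs a class function $\beta$ on $G$ which is \emph{not} a character of $G$ but restricts to an honest character on every such subgroup, fixed-point free on the rank-$2$ elementary abelian ones (Proposition \ref{classfct}). This compatible family of local representations is glued cell by cell into a genuine $G$-equivariant spherical fibration over the first sphere, the obstructions being killed after sufficiently many fiber joins because $\pi_k(Aut_G(S(W^{\oplus q})))$ is finite for suitable $q$ (Propositions \ref{tec} and \ref{constr}); the fibration is then trivialized by further joins. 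This replaces your Borel-construction/Euler-class step, and it yields finiteness of the total space directly, with no need for Swan's finiteness machinery or a Postnikov truncation. To make your argument work you would have to replace "find a $G$-representation $V_i$" by "find a family of representations of the isotropy subgroups, compatible under restriction and fixed-point free at top rank," and supply the gluing mechanism; the case analysis needed is of the isotropy subgroups after the first stage, not of the groups $G$ themselves.
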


Note that a converse to theorem \ref{mainthm} is given by Hanke in
\cite{hanke} in the sense that: if $(\IZ/p)^r$ acts freely on $X=
S^{n_1}\times ... \times S^{n_k}$ and if $p>3dim(X)$, then $r\leq
k$.
 We outline now the structure of the paper. Let p be an odd prime,
 let $G$ be a p-group and $S(V)$ a complex representation $G$-sphere.
In section 2 we first prove that, for all integer $k\geq 0$, there
exists a positive integer $q$ such that the group
$\pi_k(Aut_G(S(V^{\oplus q})))$ is finite. We then incorporate this
result in an outline of a known construction (\cite{adem1},
\cite{connolly}, \cite{Unlu}) that, in favorable conditions, gives a
strategy to build group actions on products of spheres with
controlled isotropy subgroups.

In section 3 we apply section 2 to prove that for $G$
a rank 3 p-group, p odd, there is a free
 finite $G$-CW-complex $X\simeq S^m\times S^d\times S^k$.
In section 4 we use section 2 to generalize theorem 3.2 in
\cite{adem1} for a p-group $G$: if $X$ is a finite dimensional
$G$-CW-complex with abelian isotropy, we show that there is
a free finite dimensional $G$-CW-complex $Y\simeq X\times S^1\times ...\times S^{n_k}$.
As a corollary we will be able to
build free finite
$G$-CW-complexes $X\simeq S^{n_1}\times ... \times S^{n_{rk(G)}}$
for $G$ a central extension of abelian p-groups.
Our results overlap here with those of \H{U}nl\H{u} and Yal\c{c}in \cite{UY}.

In section 5 we discuss the rank conjecture for some infinite
groups. The motivation comes from a result of Connolly and Prassidis
\cite{connolly} stating that: a group with finite virtual
cohomological dimension, which is countable and with rank 1 finite
subgroups, acts freely on a finite dimensional CW-complex $X\simeq
S^m$. We show that an effective $\Gamma$-sphere does not need to
exist but that the algebraic analogue still holds. More in detail:
First, we define an amalgamated product $\Gamma$ such that every
finite dimensional $\Gamma$-space homotopy equivalent to a sphere
has an isotropy subgroup of rank 2. Secondly, we prove that for all
groups $\Gamma$ with finite virtual cohomological dimension, there
is a finite dimensional $\IZ \left[\Gamma\right]$-projective complex
$\textbf{D}$ with $H^{\ast}(\textbf{D})\cong
    H^{\ast}(S^{n_1}\times ...\times S^{n_{rk(\Gamma)}})$.

\section{The general construction.}

The main result of this section is the construction of proposition
\ref{constr}. A key ingredient of the construction is proposition
\ref{tec}, which says that under some conditions $\pi_k(Aut_G(S^n))$
is finite. We begin with a series of lemmas and corollaries that we
assemble into a proof of proposition \ref{tec}. Lemmas \ref{lemma0},
\ref{lemma1} and \ref{lemma2} are individual results needed in the
proof of proposition \ref{tec}. Lemma \ref{pair} serves the proof of
lemma \ref{lemma2}.

\begin{lem} \label{lemma0}
Let $X$ be a $G$-CW-complex and let $Aut_G(X)$ be
 the monoid of $G$-equivariant self-homotopy equivalences of $X$.
For $k>0$, the map of unbased homotopy classes
$\varphi: \left[S^k, Aut_G(X)\right]\rightarrow \left[S^k\times X , X\right]_G$
is injective and, up to the choice of a connected component, factors through:
$$\xymatrix{ \left[S^k, Aut_G(X)\right] \ar[d] \ar[r]^{\varphi} & \left[S^k\times X , X\right]_G \\
    \pi_k(Aut_G(X)) \ar[ur] & }$$
In particular all $G$-equivariant homotopies $H:I\times S^k\times X\rightarrow X$
between maps in $Im(\varphi)$
can be taken to satisfy $H(t,\star,x)=H(t',\star,x)$ for
all $t, t'\in I$ and $x\in X$.
\end{lem}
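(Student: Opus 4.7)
The plan is to recognize $\varphi$ as the restriction of the standard exponential adjunction $[S^k, Map_G(X,X)] \cong [S^k\times X, X]_G$ to a union of path components of $Map_G(X,X)$, and then to sharpen the resulting picture by exploiting the topological-monoid structure on $Aut_G(X)$.

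The first step is to verify that $Aut_G(X)\subseteq Map_G(X,X)$ is a union of path components. For this, if $g\simeq_G f$ in $Map_G(X,X)$ with $f\in Aut_G(X)$ having $G$-homotopy inverse $f'$, then $gf'\simeq_G ff'\simeq_G \mathrm{id}_X$ and $f'g\simeq_G f'f\simeq_G \mathrm{id}_X$, so $g\in Aut_G(X)$. Consequently, any $G$-homotopy between two elements of $Aut_G(X)$ stays inside $Aut_G(X)$, and combined with the exponential bijection this yields the injectivity of $\varphi$.

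For the factorization, I would exploit that composition makes $Aut_G(X)$ a topological monoid with strict identity $\mathrm{id}_X$, hence an $H$-space. A standard consequence is that the $\pi_1$-action on $\pi_k$ is trivial at every basepoint, so the forgetful map $\pi_k(Aut_G(X),b)\to [S^k, Aut_G(X)]$ is a bijection onto the path component of $b$. Choosing a basepoint in each component---the source of the ``up to the choice of a connected component'' clause---produces the required factorization, with commutativity of the triangle being immediate from the definitions.

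For the final ``in particular'' clause I would pass to the adjoint $\hat H\colon I\times S^k\to Aut_G(X)$. Given $\tilde f_0,\tilde f_1\in \mathrm{Im}(\varphi)$ and a $G$-homotopy $H$ between them, I would first adjust representatives within their unbased homotopy classes so that $f_0(\star)=f_1(\star)=h$; this is possible because $f_0(\star)$ and $f_1(\star)$ lie in the same path component of $Aut_G(X)$ and one can translate along a path there. The adjoint $\hat H$ then becomes a free homotopy between two maps based at $h$, and the trivial $\pi_1$-action just established lets me replace $\hat H$ by a based homotopy $\hat H'$ with $\hat H'(t,\star)=h$ for all $t$; the corresponding $H'$ is then the desired $G$-equivariant homotopy constant on $\{\star\}\times X$. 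The main obstacle I anticipate is the union-of-components claim itself---although intuitive it requires producing a $G$-homotopy inverse and checking both composites up to $G$-homotopy---after which the remaining steps are routine consequences of the exponential law and of standard $H$-space theory.
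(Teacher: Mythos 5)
Your proposal is correct and follows essentially the same route as the paper: injectivity comes from the fact that $Aut_G(X)$ is closed under $G$-homotopy inside $Map_G(X,X)$ (the paper phrases this by checking each slice $H\arrowvert_{\{t\}\times\{x\}\times X}$ is a homotopy equivalence, you by saying $Aut_G(X)$ is a union of path components — same content), and the factorization comes from the $H$-space structure forcing the $\pi_1$-action on $\pi_k$ to be trivial on every component, using the translation equivalences between components. Your more detailed treatment of the final clause (adjusting representatives so the values at $\star$ agree, then replacing the free homotopy by a based one) is a correct elaboration of what the paper leaves as "follows from the diagram."
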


\begin{proof}
The map $\varphi: \left[S^k, Aut_G(X)\right]\rightarrow \left[S^k\times X , X\right]_G$
is clearly well defined. To see that it is injective,
consider a $G$-equivariant homotopy $H:I\times S^k\times X  \rightarrow X$
from $\varphi (f)$ to $\varphi (g)$.
Clearly $H\arrowvert_{\left\{0\right\}\times\left\{x_0\right\}\times X}=\varphi (f)(x_0) \in Aut_G(X)$.
Which implies that $H\arrowvert_{\left\{t\right\}\times\left\{x\right\}\times X}\in Aut_G(X)$
for all $(t,x)\in I\times S^k$ because
$H\arrowvert_{\left\{t\right\}\times\left\{x\right\}\times X}\simeq
    H\arrowvert_{\left\{0\right\}\times\left\{x_0\right\}\times X}$
via a path in $I\times S^k$ from $(0,x_0)$ to $(t,x)$.
As a result, $H$ defines an homotopy from $f$ to $g$.
To prove that $\varphi$ factors, up to the choice of a connected component, through:
$$\xymatrix{ \left[S^k, Aut_G(X)\right] \ar[d] \ar[r]^{\varphi} & \left[S^k\times X , X\right]_G \\
    \pi_k(Aut_G(X)) \ar[ur] & }$$
we want to show that the map $\pi_k(Aut_G(X))\rightarrow \left[S^k,
Aut_G(X)\right]$ is a bijection. Observe that $Aut_G(X)$ is a
monoid, thus an $H$-space so that $\pi_1(Aut_G(X),Id)$ acts
trivially on $\pi_k(Aut_G(X),Id)$. The monoid $Aut_G(X)$ is very
nice because all of its connected components are homotopy equivalent
through maps of the form: $(Aut_G(X),Id)\rightarrow (Aut_G(X),f)$
with $g\mapsto f\circ g$. Consequently $\pi_1(Aut_G(X),f)$ acts
trivially on $\pi_k(Aut_G(X),f)$ for all $f\in Aut_G(X)$. We
conclude that $\pi_k(Aut_G(X))\rightarrow \left[S^k,
Aut_G(X)\right]$ is a bijection. The last claim directly follows
from the diagram.
\end{proof}

\begin{lem} \label{lemma1}
Let $G$ be a finite group acting on a space $X$.
Let $H_1<G$ be an isotropy subgroup maximal among
isotropy subgroups. Set
$X_1=\left\{x\in X| \,\ G_x\in (H_1)\right\}$,
where $(H_1)$ denotes the conjugacy class of $H_1$. We then have
that $Aut_G(X_1)\cong Aut_{WH_1}(X^{H_1})$
 (here $WH_1=NH_1/H_1$ is the Weil group).
\end{lem}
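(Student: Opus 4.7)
The plan is to identify $X_1$ explicitly as an induced $G$-space and then invoke the standard induction-restriction adjunction for equivariant maps. First I would verify that, because $H_1$ is maximal among the isotropy subgroups, the fixed-point set $X^{H_1}$ consists of points whose stabilizer is exactly $H_1$: any $x\in X^{H_1}$ satisfies $H_1\subseteq G_x$, but $G_x$ is itself an isotropy subgroup, so maximality forces $G_x=H_1$. In particular $X^{H_1}\subseteq X_1$, and $X^{H_1}$ is stable under the normalizer $NH_1$, with $H_1$ acting trivially; so the residual action is an action of $WH_1=NH_1/H_1$.

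Next I would show the $G$-homeomorphism
\[
X_1 \;\cong\; G\times_{NH_1} X^{H_1}.
\]
The obvious map $G\times X^{H_1}\to X_1$, $(g,x)\mapsto gx$, is surjective because every $x'\in X_1$ has $G_{x'}=gH_1 g^{-1}$ for some $g$, so $g^{-1}x'\in X^{H_1}$. For injectivity of the induced map from $G\times_{NH_1}X^{H_1}$, suppose $gx=g'x'$ with $x,x'\in X^{H_1}$; then $n:=g^{-1}g'$ sends $x'$ to $x$, and conjugation by $n$ sends $G_{x'}=H_1$ to $G_x=H_1$, so $n\in NH_1$. This gives the required identification as $G$-sets, and the quotient topology makes it a $G$-homeomorphism for CW-complexes.

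Finally I would apply the standard adjunction: for any subgroup $K\le G$ and $K$-space $Y$, restriction to $\{e\}\times Y$ gives a bijection $\mathrm{Map}_G(G\times_K Y, G\times_K Y)\to \mathrm{Map}_K(Y, G\times_K Y)$, and under the identification of $Y$ with its image, this specializes to an isomorphism of monoids $Aut_G(G\times_K Y)\cong Aut_K(Y)$ (the inverse is extension by $G$-equivariance, $\tilde f(g,x)=(g,f(x))$, well-defined because $f$ is $K$-equivariant). Taking $K=NH_1$ and $Y=X^{H_1}$ and using that $H_1$ acts trivially on $X^{H_1}$ so that $Aut_{NH_1}(X^{H_1})=Aut_{WH_1}(X^{H_1})$, we obtain $Aut_G(X_1)\cong Aut_{WH_1}(X^{H_1})$, as desired.

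The only real point requiring care is the verification that the set-theoretic identification $X_1\cong G\times_{NH_1}X^{H_1}$ is genuinely a $G$-homeomorphism, which is where maximality of $H_1$ is essential: without it, points of $X^{H_1}$ could have larger stabilizers and the map $(g,x)\mapsto gx$ would fail to have fibers equal to $NH_1$-orbits. Everything else is a routine unwinding of definitions.
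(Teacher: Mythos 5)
Your route---identifying $X_1$ with the induced space $G\times_{NH_1}X^{H_1}$ and then passing to mapping monoids---is in substance the same as the paper's argument in different packaging: the paper's explicit inverse $res^{-1}(f)(x)=gf(g^{-1}x)$ (with $g$ chosen so that $g^{-1}x\in X^{H_1}$) is exactly your ``extension by $G$-equivariance,'' and the paper's well-definedness check for $res^{-1}$ is your verification that the fibers of $(g,x)\mapsto gx$ are precisely $NH_1$-orbits. The paper's opening observation that $X_1=\coprod_{H\in(H_1)}X^H$ is the same fact as your induced-space decomposition.

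There is, however, one step you state too strongly. The claim that for an arbitrary subgroup $K\le G$ and $K$-space $Y$, restriction to $\left\{e\right\}\times Y$ gives a monoid isomorphism $Aut_G(G\times_K Y)\cong Aut_K(Y)$ is false in general: take $Y$ a point and $K$ a proper subgroup with nontrivial Weyl group, so that $G\times_K Y=G/K$, $Aut_G(G/K)\cong WK$, while $Aut_K(\mathrm{pt})$ is trivial. The issue is that a $G$-equivariant self-map of $G\times_K Y$ need not carry the slice $\left\{e\right\}\times Y$ into itself, so ``restriction'' does not obviously land in $\mathrm{Map}_K(Y,Y)$; extension-by-equivariance is always an injective monoid homomorphism, and its surjectivity is exactly the content requiring proof. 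In your situation this is rescued by maximality, which you have already exploited elsewhere: if $F:X_1\to X_1$ is $G$-equivariant and $x\in X^{H_1}$, then $F(x)\in X_1$ is fixed by $G_x=H_1$, so $G_{F(x)}$ is a conjugate of $H_1$ containing $H_1$ and hence equals $H_1$, i.e.\ $F(X^{H_1})\subseteq X^{H_1}$. Adding this sentence (and its analogue for homotopies $I\times X_1\to X_1$, so that the identification is compatible with the homotopies defining $Aut$) closes the gap; with that, your argument is a clean repackaging of the paper's proof.
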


\begin{proof}
Let's begin by studying $X_1$. Clearly $X_1\subset\cup_{H\in (H_1)} X^H$.
Since $H\in (H_1)$ is supposed to be maximal,
we must have that if $x\in X^H$,
then $G_x=H$ so that $X_1=\cup_{H\in (H_1)} X^H$.
Similarly, if $x\in X^{H}\cap X^{H'}$,
then $H=G_x=H'$. As a result $X_1=\coprod_{H\in (H_1)} X^H$.

Observe next that a $G$-equivariant map
$f:X_1\rightarrow X_1$ restricts to a
$WH_1$-equivariant map $f_1:X^{H_1}\rightarrow X^{H_1}$
because $WH_1=NH_1/H_1$
and $H_1$ acts trivially on $X_1$. The same holds
for a $G$-equivariant homotopy $F:I\times X_1\rightarrow X_1$,
so that we have a well defined map
$res:Aut_G(X_1)\rightarrow Aut_{WH_1}(X^{H_1})$.

We want to show now that the map
$res:Aut_G(X_1)\rightarrow Aut_{WH_1}(X^{H_1})$ has an inverse
given by $res^{-1}(f)(x)=gf(g^{-1}x)$, where $g\in G$
is such that $g^{-1}x\in X^{H_1}$.
We begin by showing that $res^{-1}$ is well defined.
For all $x\in X_1=\coprod_{H\in (H_1)} X^H$ there is
a $g\in G$ such that $x\in X^{gH_1g^{-1}}$, so that
$g^{-1}x\in X^{H_1}$. Assume that
$\gamma\in G$ is also such that $\gamma^{-1}x\in X^{H_1}$.
Clearly $gH_1g^{-1}=H=\gamma H_1\gamma^{-1}$, where
$x\in X^H$. Thus $\gamma^{-1}gH_1g^{-1}\gamma=H_1$
so that $\gamma^{-1}g\in NH_1$. For $f\in Aut_{WH_1}(X^{H_1})$
we then have
$gf(g^{-1}x)=gg^{-1}\gamma f(\gamma^{-1}gg^{-1}x)=\gamma f(\gamma^{-1}x)$
because $f$ is $NH_1$-equivariant. Therefore $res^{-1}$ is
well defined.

Next, we show that $res^{-1}(f)$ is $G$-equivariant:
For $x\in X_1$, let again $g\in G$
be such that $g^{-1}x\in X^{H_1}$. For all
$g_0\in G$ we have that $(g_0g)^{-1}g_0x\in X^{H_1}$.
As a result $res^{-1}(f)(g_0x)=g_0gf((g_0g)^{-1}g_0x)=
g_0gf(g^{-1}x)=g_0res^{-1}(f)(x)$.
In the same way one can check that
if $f^{-1}$ is the homotopy inverse of
$f$ via homotopies $H$ and $H^{-1}$, then
$res^{-1}f^{-1}$ is the homotopy inverse of
$res^{-1}f$ via homotopies $res^{-1}H$ and $res^{-1}H^{-1}$.
Finally we observe that $res\circ res^{-1}=Id$
by choosing $g=1$, while $res^{-1}\circ res=Id$
because $res^{-1}f=f$ when $f$ is $G$-equivariant.
\end{proof}

\begin{lem} \label{pair}
Let $G$ be a finite group
and $S^n$ a linear $G$-sphere. If $0<k<n$
then $H^n(S^k\times S^n/G,\left\{\ast\right\}\times S^n/G,\Z)$ is finite.
\end{lem}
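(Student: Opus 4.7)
The plan is to reduce the question to a computation of $H^{\ast}(S^n/G; \Z)$ and then finish with finite generation plus a transfer argument.

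For the first step, I would observe that $(S^k \times S^n/G,\ \{\ast\} \times S^n/G)$ is the product pair of $(S^k, \{\ast\})$ with $S^n/G$. Because $H^{\ast}(S^k, \{\ast\}; \Z)$ is free abelian and concentrated in degree $k$, the relative K\"unneth formula has no $\mathrm{Tor}$ contributions and gives
$$H^n\bigl(S^k \times S^n/G,\ \{\ast\} \times S^n/G;\ \Z\bigr) \;\cong\; H^k(S^k, \{\ast\}; \Z) \otimes H^{n-k}(S^n/G; \Z) \;\cong\; H^{n-k}(S^n/G; \Z),$$
and the hypothesis $0 < k < n$ puts this in the range $0 < n-k < n$.

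For the second step, I would show that $H^j(S^n/G; \Z)$ is finite for $0 < j < n$. Since the $G$-action on $S^n$ is linear, $S^n$ admits a finite $G$-CW-structure, whose quotient gives $S^n/G$ a finite CW-structure; hence $H^j(S^n/G; \Z)$ is finitely generated in every degree. It then suffices to show the group is torsion, i.e.\ that $H^j(S^n/G; \Q) = 0$ in this range. This is the standard transfer/averaging argument: the quotient map $p \colon S^n \to S^n/G$ induces $p^{\ast} \colon H^{\ast}(S^n/G; \Q) \to H^{\ast}(S^n; \Q)^G$, and the averaging transfer $\tau$ satisfies $\tau \circ p^{\ast} = |G|\cdot\mathrm{id}$ as well as $p^{\ast} \circ \tau = |G|\cdot\mathrm{id}$ on the $G$-invariants. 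Since $|G|$ is invertible in $\Q$, $p^{\ast}$ is an isomorphism, and $H^j(S^n; \Q)^G \subset H^j(S^n; \Q) = 0$ for $0 < j < n$ gives the vanishing. A finitely generated torsion abelian group is finite, completing the proof.

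I do not anticipate any real obstacle: both ingredients are classical. The only care needed is to make sure the relative K\"unneth identification is the correct one for the product pair as written and that the linearity hypothesis is genuinely used to pin down a finite CW-structure on the orbit space (so that finite generation is available); beyond that, the transfer argument is purely formal.
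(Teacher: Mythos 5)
Your proof is correct and follows essentially the same route as the paper: both reduce the relative group to $H^{n-k}(S^n/G;\Z)$ (you via the relative K\"unneth formula, the paper via the long exact sequence of the pair together with the K\"unneth splitting of $H^n(S^k\times S^n/G)$) and then conclude finiteness from the vanishing of the rational cohomology of the orbit space in degrees $0<j<n$ (you by the transfer, the paper by citing Vietoris--Begle). Your explicit remark that the linear action gives a finite CW structure on $S^n/G$, hence finite generation, is a point the paper leaves implicit.
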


\begin{proof}
Consider the long exact sequence of the pair
$(S^k\times S^n/G,\left\{\ast\right\}\times S^n/G)$ with integer coefficients:
$$\xymatrix{  H^{n-1}(S^k\times S^n/G)\ar@{->>}[r] &
        H^{n-1}(\left\{\ast\right\}\times S^n/G) \ar[d] & & \\
    & H^n(S^k\times S^n/G,\left\{\ast\right\}\times S^n/G) \ar[r]  &
        H^{n}(S^k\times S^n/G)\ar[r]^{i^{\ast}} &
        H^{n}(\left\{\ast\right\}\times S^n/G)}$$
Clearly $H^n(S^k\times S^n/G,\left\{\ast\right\}\times S^n/G)\subset ker(i^{\ast})$.
But $H^{n}(S^k\times S^n/G)=H^{n}(\left\{\ast\right\}\times S^n/G)\oplus
    H^{n-k}(\left\{\ast\right\}\times S^n/G)$.
Thus for $i^{\ast}:H^{n}(S^k\times S^n/G)\rightarrow H^{n}(\left\{\ast\right\}\times S^n/G)$
we have that $Ker(i^{\ast})=H^{n-k}(\left\{\ast\right\}\times S^n/G)$.
Finally, the groups $H^{n-k}(S^n/G)$ are finite for $0<k<n$
because $H^{n-k}(S^n/G,\IQ )=0$ by the Vietoris-Begle theorem.
\end{proof}

\begin{lem} \label{lemma2}
Let $G$ be a finite group
and $S(V)$ a linear $G$-sphere.
For $H<G$ write $n_r(H)$ for the integer
such that $S(V^{\oplus r})^H=S^{n_r(H)}$. For all
$k>0$ there is an integer $q>0$ such that the groups:
$$H^{n_q(H_i)}(S^k\times S^{n_q(H_i)}/WH_i,
    \cup_{H>H_i}S^k\times S^{n_q(H)}/WH_i\cup \left\{\star\right\}\times S^{n_q(H_i)}/WH_i,\Z)$$
are finite for all $H_i$ with $n_1(H_i)>0$.
\end{lem}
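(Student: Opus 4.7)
The plan is to reduce the relative cohomology appearing in the statement to one involving only the pair $(Y,B) := (S^{n_q(H_i)}/WH_i,\ \cup_{H>H_i} S^{n_q(H)}/WH_i)$, and then to combine a dimension bound on $B$ with the Vietoris--Begle argument already employed in lemma \ref{pair}. The first step is the smash-of-pairs identification
$$(S^k \times Y)/(S^k \times B \cup \{\star\} \times Y) \;\cong\; S^k \wedge (Y/B),$$
which together with the $k$-fold suspension isomorphism in reduced cohomology yields
$$H^{n_q(H_i)}\bigl(S^k \times Y,\, S^k \times B \cup \{\star\} \times Y;\, \Z\bigr) \;\cong\; H^{n_q(H_i) - k}(Y, B;\, \Z).$$
It therefore suffices to make this last group finite.

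To bound $\dim B$, I would observe that for any $H > H_i$ the invariants $V^H \subseteq V^{H_i}$ form a $\C$-linear subspace: either the inclusion is an equality, in which case $B = Y$ and the relative cohomology vanishes trivially, or $\dim_{\C} V^H < \dim_{\C} V^{H_i}$ strictly, so that $n_q(H) \leq n_q(H_i) - 2q$. In the nontrivial case this gives $\dim B \leq n_q(H_i) - 2q$. Now pick $q$ large enough that $n_q(H_i) - k \geq \dim B + 2$ for every $H_i$; by the estimate above any $q \geq \lceil (k+2)/2 \rceil$ works, uniformly in $H_i$. For such a $q$ the long exact sequence of the pair $(Y,B)$ collapses to an isomorphism $H^{n_q(H_i) - k}(Y, B;\, \Z) \cong H^{n_q(H_i) - k}(Y;\, \Z)$, and the same choice of $q$ also guarantees $0 < n_q(H_i) - k < n_q(H_i)$ (using $n_q(H_i) \geq 2q - 1$, which holds since $n_1(H_i) \geq 1$); so the Vietoris--Begle argument from lemma \ref{pair} makes this last group finite.

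The main obstacle is the \emph{a priori} possibility that $V^H = V^{H_i}$ for some strictly larger subgroup $H$, which would block any naive dimension-shift; the resolution is the observation above that this degeneracy forces $B = Y$ and hence a trivially vanishing relative cohomology. Everything else is routine once the cohomology is pushed through the suspension isomorphism and the linear-in-$q$ separation between $n_q(H_i)$ and $\dim B$ is available.
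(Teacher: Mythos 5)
Your argument is correct, and its skeleton matches the paper's: the same case split on whether some $H>H_i$ satisfies $V^H=V^{H_i}$ (which forces $B=Y$ and trivial vanishing), the same observation that the codimension gap between $S^{n_q(H_i)}$ and $\cup_{H>H_i}S^{n_q(H)}$ grows linearly in $q$, and the same Vietoris--Begle finiteness of $H^{j}(S^{n}/WH_i;\Z)$ for $0<j<n$ at the end. Where you genuinely differ is the reduction mechanism. The paper argues cellularly: for $q$ large, every cell of $S^k\times S^{n_q(H_i)}/WH_i$ of dimension $\geq n_q(H_i)-2$ already lies in the subcomplex $\cup_{H>H_i}S^k\times S^{n_q(H)}/WH_i$, so that subcomplex can be deleted without changing the relative group in degree $n_q(H_i)$, and the result is then handed to lemma \ref{pair}, whose proof is a K\"unneth-plus-long-exact-sequence computation of $H^n(S^k\times S^n/G,\{\star\}\times S^n/G)$. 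You instead collapse everything in one stroke via $(S^k\times Y)/(S^k\times B\cup\{\star\}\times Y)\cong S^k\wedge(Y/B)$ and the $k$-fold suspension isomorphism, landing directly on $H^{n_q(H_i)-k}(Y,B)$, and then discard $B$ by a dimension count in the long exact sequence of $(Y,B)$. Your version subsumes lemma \ref{pair} as the case $B=\emptyset$, and it yields an explicit uniform bound $q\geq\lceil(k+2)/2\rceil$ independent of $H_i$, whereas the paper takes a maximum of subgroup-dependent $q_i$'s. One small point of hygiene: the dimension bound $\dim B\leq n_q(H_i)-2q$ requires that the inclusion $V^H\subset V^{H_i}$ be strict for \emph{all} $H>H_i$ simultaneously, so the dichotomy should be stated as ``either some $H$ achieves equality (done trivially) or all are strict (proceed)''; your closing remark shows you intend exactly this, but the phrasing in the second paragraph reads as a per-$H$ alternative.
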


\begin{proof}
Fix a subgroup $H_i<G$ such that $n_1(H_i)>0$.
If there is $H>H_i$ with $n_1(H)=n_1(H_i)$,
then the required cohomology group is zero
(it is of the form $H^{n(H_i)}(X,X,\Z)$).
Assume that for all $H>H_i$ we have $n_1(H)<n_1(H_i)$.
In this case we want so show that we can take
enough direct sums to be in the situation of
corollary \ref{pair}.

Let $n_{r,i}=max_{H>H_i}\left\{n_r(H)\right\}$ and $m_{r,i}=n_r(H_i)-n_{r,i}>0$.
Observe that $n_r(H)=rn_1(H)+(r-1)$
so that $n_{r,i}=rn_{1,i}+(r-1)$ and
$m_{r,i}=n_r(H_i)-n_{r,i}=rn_1(H_i)+(r-1)-(rn_{1,i}+(r-1))=rm_{1,i}$.
Therefore there is a $q_i$ big enough such that
$m_{r,i}>k+2$. In other words
$n_r(H_i)-k-2>n_{r,i}$.
We have found an integer $q_i>0$ such that
all the cells $\tau$ of the CW-complex
$ S^{n_{q_i}(H_i)}$ of dimension $dim(\tau )\geq n_{q_i}(H_i)-k-2$, are also cells
of the relative CW-complex
$(S^{n_{q_i}(H_i)}, \cup_{H>H_i} S^{n_{q_i}(H)})$.

We turn now our attention to the announced cohomology group.
By our condition on the cells of $S^{n_{q_i}(H_i)}$, we have that
the cells $\tau$ of the CW-complex
$ S^k\times S^{n_{q_i}(H_i)}/WH_i$ of dimension
$dim(\tau )\geq n_{q_i}(H_i)-2$, are also cells
of the relative CW-complex
$(S^k\times S^{n_{q_i}(H_i)}/WH_i, \cup_{H>H_i} S^k\times S^{n_{q_i}(H)}/WH_i)$.
Henceforth:
$H^{n_{q_i}(H_i)}(S^k\times S^{n_{q_i}(H_i)}/WH_i,
    \cup_{H>H_i}S^k\times S^{n_{q_i}(H)}/WH_i\cup \left\{\star\right\}\times S^{n_{q_i}(H_i)}, \Z)=
    H^{n(H_i)}(S^k\times S^{n_{q_1}(H_i)}/WH_i, \left\{\star\right\}\times S^{n_{q_i}(H_i)}/WH_i,\IZ)$.
This last group is finite, by virtue of lemma \ref{pair}.
We conclude by observing that we can then set $q=max_{H_i<G}\left\{q_i\right\}$.
\end{proof}

\begin{prop} \label{tec}
Let $G$ be a finite $p$-group. Let $S(V)$ be a complex
representation $G$-sphere. For all integer $k\geq 0$ there
exists an integer $q> 0$ such that
$\pi_k(Aut_G(S(V^{\oplus q})))$ is finite.
\end{prop}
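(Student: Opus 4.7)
The plan is to reduce $\pi_k(Aut_G(X))$, with $X = S(V^{\oplus q})$, to counting $G$-equivariant homotopy classes of maps $S^k\times X\to X$ extending the identity on $\{\star\}\times X$, and then to climb through the isotropy stratification of $X$ by equivariant obstruction theory, choosing $q$ large enough that lemma \ref{lemma2} bounds the only potentially infinite obstruction group.

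By lemma \ref{lemma0}, the map $\pi_k(Aut_G(X))\to [S^k\times X, X]_G$ is injective, and all the homotopies in play can be taken rel $\{\star\}\times X$. I would enumerate the conjugacy classes of isotropy subgroups $(H_1),\ldots,(H_N)$ of $X$ in an order for which $X^{H_j}$ grows with $j$, and construct/count extensions inductively. At step $j$ one has an already-built $G$-equivariant map on $\{\star\}\times X\cup \bigcup_{H>H_j} S^k\times X^H$, and one seeks to extend over $S^k\times X^{H_j}$. By lemma \ref{lemma1} this is a $WH_j$-equivariant problem targeting $X^{H_j}=S^{n_q(H_j)}$, and since $WH_j$ acts freely on $X^{H_j}\setminus\bigcup_{H>H_j}X^H$, the obstructions can be computed in the orbit space.

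Because $S^{n_q(H_j)}$ is $(n_q(H_j)-1)$-connected, the difference obstructions distinguishing two such relative extensions live in the cohomology groups
\begin{equation*}
H^{n_q(H_j)+r}\Bigl(\tfrac{S^k\times X^{H_j}}{WH_j},\ \bigcup_{H>H_j}\tfrac{S^k\times X^H}{WH_j}\cup \tfrac{\{\star\}\times X^{H_j}}{WH_j};\ \pi_{n_q(H_j)+r}(S^{n_q(H_j)})\Bigr),
\end{equation*}
for $r=0,1,\ldots,k$. For $r=0$ the coefficient group is $\IZ$, and lemma \ref{lemma2} is exactly the statement that the ambient cohomology is finite once $q$ is chosen as there. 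For $r>0$ the coefficient is a positive-stem homotopy group of a sphere, hence finite, and the ambient space is a finite CW-complex, so these cohomology groups are automatically finite.

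Since there are only finitely many isotropy conjugacy classes in $X$ and only finitely many obstruction degrees at each stage, taking $q$ to be the maximum of the values produced by lemma \ref{lemma2} over all $H_j$ bounds the number of $G$-homotopy classes of extensions by a finite product of finite groups, whence $\pi_k(Aut_G(X))$ is finite. The main technical obstacle I anticipate is cleanly identifying the equivariant difference obstructions with the groups appearing in lemma \ref{lemma2}: this rests on the freeness of the $WH_j$-action on the open stratum of $X^{H_j}$ (standard since $H_j$ is an isotropy subgroup of a linear $G$-sphere) together with an inductive bookkeeping that keeps the partial extensions compatible at every stage with lemma \ref{lemma0}'s component-wise identification.
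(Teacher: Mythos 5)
Your proposal is correct and follows essentially the same route as the paper: stratify $S(V^{\oplus q})$ by isotropy type, use Lemma \ref{lemma0} to work with homotopies rel $\{\star\}\times X$, reduce each stratum to a $WH_j$-equivariant problem on $X^{H_j}$ via Lemma \ref{lemma1}, and count extensions with tom Dieck-style difference cocycles whose only non-obviously-finite obstruction group (integral coefficients in degree $n_q(H_j)$) is controlled by Lemma \ref{lemma2}. The one point to add is the case $k=0$, which your framework (and Lemma \ref{lemma0}) does not cover and which the paper handles by citing Ferrario's result on $\pi_0$ of equivariant self-equivalences of spheres.
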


\begin{proof}
If $k=0$, the result has been proven in \cite{ferrario}.
Assume that $k>0$. Before explaining how the proof proceeds,
we set up some notation:
Choose an ordering of the conjugacy classes of isotropy subgroups
$\left\{(H_1),...,(H_m)\right\}$ such that if $(H_j)<(H_i)$ then $i<j$.
Consider the filtration $S(V)_1\subset ...\subset S(V)_m=S(V)$ given
by $S(V)_i=\left\{x\in S(V)| \,\ (G_x)=(H_j); \,\ j\leq i\right\}$.
Observe that we have homomorphisms
$R_i:\pi_k(Aut_G(S(V)))\rightarrow \pi_k(Aut_G(S(V)_{i}))$
because $H(I\times S(V)_{i})\subset S(V)_{i}$ for all equivariant
$H:I\times S(V)\rightarrow S(V)$.
Similarly we have homomorphisms $S_i:\pi_k(Aut_G(S(V)_i))\rightarrow \pi_k(Aut_G(S(V)_{i-1}))$.
Here is how the proof runs. Look at the commutative diagram:
$$\xymatrix{ \pi_k(Aut_G(S(V))) \ar[r]^{R_{m}=Id} \ar[dr]^{R_{m-1}=S_{m}} \ar[dddr]^{R_i} \ar[dddddr]^{R_1} &
     \pi_k(Aut_G(S(V)_{m})) \ar[d]^{S_{m}} \\
    & \pi_k(Aut_G(S(V)_{m-1})) \ar[d]^{S_{m-1}} \\ & ... \ar[d]^{S_{i+1}} \\
     & \pi_k(Aut_G(S(V)_{i})) \ar[d]^{S_{i}} \\  & ... \ar[d]^{S_2} \\ & \pi_k(Aut_G(S(V)_{1}))}$$
Clearly to prove that $\pi_k(Aut_G(S(V)))$ is finite is the same as
to prove that $Im(R_m)$ is finite. To prove that $Im(R_m)$ is
finite, we will show by induction over $i$ that $Im(R_i)$ is finite.
Such an induction can be performed by showing that $Im(R_1)$ is
finite and that $S^{-1}_{i}(R_{i-1}(f))\cap Im(R_i)$ is finite for
all $i$ and for all $f\in \pi_k(Aut_G(S(V)))$. This outline can only
be carried out up to replacing $S(V)$ with some power $S(V^{\oplus
q})$.

\bigskip
We begin by showing that there is $q_1>0$ such that
$\pi_k(Aut_G(S(V^{\oplus q_1})_1))$ is finite.
In particular we will have that
$Im(R_1)\subset \pi_k(Aut_G(S(V^{\oplus q_1})_1))$ is finite.
For $H<G$ write $n_r(H)$ for the integer
such that $S(V^{\oplus r})^H=S^{n_r(H)}$.
Observe that $n_r(H)=rn(H)+(r-1)$.
By lemma \ref{lemma1} we have that
$\pi_k(Aut_G(S(V)_1))=\pi_k(Aut_{WH_1}(S^{n_1(H_1)}))$.
The $WH_1$-action on $S^{n_1(H_1)}$ is
free because $H_1$ is maximal among isotropy subgroups.
Therefore proposition 2.4 of \cite{connolly}
says that $\pi_k(Aut_{WH_1}(S^{n_1(H_1)}))$ is finite
if $k<n_1(H_1)-1$. If $k\geq n_1(H_1)-1$, then
there is a $q_1>0$ for which $k<q_1n_1(H_1)+(q_1-1)-1=n_{q_1}(H_1)-1$.
As a result $\pi_k(Aut_G(S(V^{\oplus q_1})_1))=\pi_k(Aut_{WH_1}(S^{n_{q_1}(H_1)}))$
is finite (always by proposition 2.4 of \cite{connolly}).

\bigskip
As explained above, the second and last step is to prove that there
is $q\geq q_1$ such that $S^{-1}_{i}(R_{i-1}(f))\cap Im(R_i)$ is
finite for all $i$ and for all $f\in \pi_k(Aut_G(S(V^{\oplus q})))$.
For that purpose we are going to use equivariant obstruction theory
a la Tom Dieck (see \cite{tomdieck1} section 8 and \cite{tomdieck2}
chapter 2). We begin with some preliminaries. As in lemma
\ref{lemma2}, let $q'>0$ be such that the groups:
$$H^{n_{q'}(H')}(S^k\times S^{n_{q'}(H')}/WH',
    \cup_{H>H'}S^k\times S^{n_{q'}(H)}/WH'\cup \left\{\star\right\}\times S^{n_{q'}(H')}/WH',\Z)$$
are finite for all $H'<G$ with $n_1(H')>0$. Let $q=max \left\{q_1,q'\right\}$.
To simplify the notation we write $W=V^{\oplus q}$,
$X=S^k\times S(W)$ and
$\bar{X}^{H_i}=\cup_{H>H_i}X^H\cup \left\{\star\right\}\times S(W)^{H_i}$.
With this notation we have that the group:
$$H^{n_{q}(H_i)}(X^{H_i}/WH_i, \bar{X}^{H_i}/WH_i, \pi_{n_{q}(H_i)}(S^{n_{q}(H_i)}))$$
is finite by lemma \ref{lemma2}, while if $r\neq n_{q}(H_i)$ then the groups:
$$H^{r}(X^{H_i}/WH_i, \bar{X}^{H_i}/WH_i, \pi_{r}(S^{n_{q}(H_i)}))$$
are finite because they are finitely generated torsion abelian
groups. (The fixed points of a complex representation spheres are
odd-dimensional spheres whose homotopy groups are all but one
finite).

\bigskip
To use equivariant obstruction theory we need one last observation:
By lemma \ref{lemma0} there is an injection
$\pi_k(Aut_G(S(W)_i))\rightarrow \left[S^k\times S(W)_i,S(W)_i\right]_G$
yielding a commutative diagram with injective columns:
$$\xymatrix{ \pi_k(Aut_G(S(W)_i)) \ar[r]^{S_i} \ar[d]_{\varphi_{i}} & \pi_k(Aut_G(S(W)_{i-1})) \ar[d]_{\varphi_{i-1}} \\
    \left[ S^k\times S(W)_i, S(W)_i\right]_G \ar[r]_{s_i} & \left[ S^k\times S(W)_{i-1}, S(W)_{i-1}\right]_G}$$
As a consequence, to prove that $S^{-1}_{i}(R_{i-1}(f))\cap Im(R_i)$
is finite, it is enough to prove that
$s^{-1}_{i}(\varphi_{i-1}(R_{i-1}(f)))\cap \varphi_i(Im(R_i))$ is
finite. By abuse of notation we will keep on writing $S_i$ and
$R_{i-1}(f)$, but we will think of them as living in the bottom row
of the diagram.

\bigskip
We are now in condition of applying equivariant obstruction theory
inductively over $r$ to each of the diagrams:
$$\xymatrix{ \left[ Sk_{r+1}(X_i,X_{i-1}), S(W)_i\right]_G \ar[r]^{S_{i,r+1}} \ar[d]_{Sk_{r}} &
    \left[ X_{i-1}, S(W)_{i-1}\right]_G \\
    \left[ Sk_{r}(X_i,X_{i-1}), S(W)_i\right]_G \ar[ur]_{S_{i,r}} & }$$
If $r=0$, then $Sk_0(X_i,X_{i-1})=X_{i-1}\coprod \left\{
x_0,...,x_l\right\}$. Consequently, $S^{-1}_{i,0}(R_{i-1}(f))\cap
Sk_{0}(Im(R_i))$ depends on the connected components of
$S(W)_{i-1}$. But $S(W)_{i-1}$ has finitely many connected
components because it is a finite CW-complex, therefore
$S^{-1}_{i,0}(R_{i-1}(f))\cap Sk_{0}(Im(R_i))$ is finite. From now
on, to simplify the notation, we are going to write $f_i=R_i(f)$ for
all possible $i$ and $f$. Assume that $S^{-1}_{i,r}(f_{i-1})\cap
Sk_{r}(Im(R_i))=\left\{g^1_{i,r},...,g^t_{i,r}\right\}$ is finite of
order $t$ (i.e. $g^j_{i,r}\neq g^l_{i,r}$ if $j\neq l$). For each
$g_{i,r+1}\in S^{-1}_{i,r+1}(f_{i-1})\cap Sk_{r+1}(Im(R_i))$ there
is a unique $g^j_{i,r}$ and a homotopy $h$ from
$g_{i,r}=g_{i}\arrowvert_{Sk_r(X_i,
    X_{i-1}\cup\left\{\star\right\}\times S(W))}$
to $g^j_{i,r}=g^j_i\arrowvert_{Sk_r(X_i,
    X_{i-1}\cup\left\{\star\right\}\times S(W))}$.

The crucial observation here is that the homotopy $h$
can be supposed to be constant over the subcomplex
$\left\{\star \right\}\times S(W)$: if $h$
extends to a homotopy between $g_i$ and $g^j_i$, then,
by lemma \ref{lemma0}, the homotopy $H:I\times S^k\times S(W)_i\rightarrow S(W)_i$
between $g_i$ and $g^j_i$
can be taken to satisfy $H(t,\star,y)=H(t',\star,y)$ for
all $t, t'\in I$ and $y\in S(W)_i$.

\bigskip
We write $d(g_{i,r},h,g^j_{i,r})\in
    H^{r+1}(X^{H_i}/WH_i, \bar{X}^{H_i}/WH_i, \pi_{r+1}((S^n)^{H_i}))$ for
 the difference cocycle as in \cite{tomdieck1} section 8 and \cite{tomdieck2} chapter 2
(see also \cite{steenrod}).
Notice that $d(g_{i,r},h,g^j_{i,r})$ is a cocycle because $g^j,g\in \pi_k (Aut_G(S(W)))$
(see lemma 3.14 in \cite{tomdieck2}).
The properties of the difference cocycle are given in
 3.13 of chapter 2 of \cite{tomdieck2}. In particular
we have that, if $d(g'_{i,r},h',g^j_{i,r})=d(g_{i,r},h,g^j_{i,r})$,
then $d(g'_{i,r},h'+h^{-1},g_{i,r})=
    d(g'_{i,r},h',g^j_{i,r})+d(g^j_{i,r},h^{-1},g_{i,r})=
    d(g'_{i,r},h',g^j_{i,r})-d(g_{i,r},h,g^j_{i,r})=0$, so that
$g_{i,r+1}\simeq g'_{i,r+1}$.
We can therefore define an injection:
$$d:S^{-1}_{i,r+1}(f_{i-1})\cap Sk_{r+1}(Im(R_i))\rightarrow \coprod_{j=1}^t
    \left\{(g^j_{i,r})\right\}\times H^{r+1}(X^{H_i}/WH_i, \bar{X}^{H_i}/WH_i, \pi_{r+1}(S^{n_q(H_i)}))$$
by setting $g_i\mapsto \left\{ g^j_i\right\}\times d(g_{i,r},h,g^j_{i,r})$.
Since we chose the integer $q$ in order to have
all the cohomology groups on the right hand side to be finite,
we must have that the left hand side is finite as well.

Summarizing, by induction we have that $S^{-1}_{i,r+1}(f_{i-1})\cap Sk_{r+1}(Im(R_i))$
is finite for all $r$. Since $X$ is finite dimensional,
this shows that $S^{-1}_{i}(f_{i-1})$ is finite.
We conclude as explained in the outline at the beginning of this proof.
\end{proof}

We introduce next the following notation:

\begin{notation}
Let $G$ be a finite group and $X$ a
$G$-CW-complex. We write:
$$rk_X(G)=max \left\{ n\in\IN |
    \,\ \text{there exists} \,\ G_{\sigma} \,\ \text{with} \,\ rk(G_{\sigma})=n \right\}$$
\end{notation}

We can now state the main result of the section:

\begin{prop} \label{constr}
 Let $G$ be a finite group and let $X$
be a finite dimensional $G$-CW-complex.
Assume that to each isotropy subgroup $G_{\sigma}$
we can associate a representation
$\rho_{\sigma}:G_{\sigma}\rightarrow U(n)$
such that $\rho_{\sigma}\arrowvert_{G\tau}\cong\rho_{\tau}$
whenever $G_{\tau}<G_{\sigma}$. If $\rho_{\sigma}$ is
fixed point free for all $G_{\sigma}$ with
$rk(G_{\sigma})=rk_X(G)$, then there
exists a finite dimensional $G$-CW-complex
$E\cong X\times S^m$ with $rk_E(G)=rk_X(G)-1$.
Moreover, if $X$ is finite then $E$ is finite as well.
\end{prop}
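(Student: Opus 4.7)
The plan is to build $E$ as the total space of an equivariant sphere bundle over $X$, constructed inductively over the cells of $X$, where the fiber over a point with stabilizer $G_\sigma$ is a sufficiently large stabilization $S(\rho_\sigma^{\oplus q})$. A single integer $q$ should work uniformly, chosen via Proposition \ref{tec} so that every relevant obstruction group is finite, and then enlarged further to kill those finite obstructions.

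First I would order the conjugacy classes of isotropy subgroups by decreasing rank, and choose a skeletal filtration $X^{(0)}\subset X^{(1)}\subset\cdots \subset X$. The compatibility hypothesis $\rho_\sigma|_{G_\tau}\cong \rho_\tau$ whenever $G_\tau<G_\sigma$ furnishes $G_\tau$-equivariant identifications $S(\rho_\sigma^{\oplus q})\cong S(\rho_\tau^{\oplus q})$ along faces; these are the gluing data from which the local pieces $G\times_{G_\sigma}(\sigma\times S(\rho_\sigma^{\oplus q}))$ can be assembled. Suppose inductively that a $G$-equivariant bundle $E^{(r-1)}\to X^{(r-1)}$, with fiber $S^m=S(\rho_\sigma^{\oplus q})$ and $G$-equivalent to $X^{(r-1)}\times S^m$, has been built. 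To extend across a typical equivariant $r$-cell $G\times_{G_\sigma}D^r$, one must choose an equivariant trivialization over $D^r$ compatible with the one already chosen on $\partial D^r$; the obstruction to such a choice lies in $\pi_{r-1}(Aut_{G_\sigma}(S(\rho_\sigma^{\oplus q})))$.

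By Proposition \ref{tec}, applied to each subgroup $G_\sigma$ and each $r\leq \dim X$, one can pick a single $q$ (the maximum of the finitely many integers produced) so that all of these homotopy groups are finite. Using the $H$-space structure of $Aut_{G_\sigma}$ recorded in Lemma \ref{lemma0}, further stabilization $\rho_\sigma^{\oplus q}\rightsquigarrow \rho_\sigma^{\oplus qN}$ corresponds to multiplication by $N$ in each obstruction group; choosing $N$ to be a common multiple of the exponents of the obstructions then kills them all. Hence the inductive extension succeeds, yielding a finite-dimensional $G$-CW-complex $E$ with a $G$-fiber bundle projection $E\to X$ and fiber $S^m$, and the trivializations constructed along the way glue to a $G$-homotopy equivalence $E\simeq X\times S^m$. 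If $X$ is finite, then only finitely many equivariant cells are added at each stage, so $E$ is finite as well.

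The rank claim is then immediate: a stabilizer in $E$ above $x\in X$ is of the form $(G_x)_v$ for $v\in S(\rho_{G_x}^{\oplus q})$, and when $rk(G_x)=rk_X(G)$ the representation $\rho_{G_x}$ (hence $\rho_{G_x}^{\oplus q}$) is fixed-point-free, so $(G_x)_v\subsetneq G_x$ has strictly smaller rank; at points above $x$ with $rk(G_x)<rk_X(G)$ the stabilizer is a subgroup of $G_x$ and therefore of rank at most $rk_X(G)-1$ already. The main obstacle is the uniformity of $q$: one needs a single stabilization working simultaneously for every pair $(r,G_\sigma)$ with $r\leq\dim X$, and one needs to check that the enlargement $q\rightsquigarrow qN$ which kills the $r$-th obstruction does not resurrect lower-dimensional obstructions that had already been set to zero. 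Both points reduce to the fact that the whole construction is natural under further stabilization, which follows from Lemma \ref{lemma0} together with the monoid structure on $Aut_G$.
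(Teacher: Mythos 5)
Your proposal follows essentially the same route as the paper: an inductive construction of a $G$-equivariant spherical fibration over the skeleta of $X$ from the local models $G\times_{G_\sigma}(\sigma\times S(\rho_\sigma^{\oplus q}))$, with the extension obstruction living in $\pi_*(Aut_{G_\sigma}(S(\rho_\sigma^{\oplus q})))$, killed by combining the finiteness from Proposition \ref{tec} with further fiber joins (stabilization acting as multiplication in the obstruction group, as in Connolly--Prassidis), and the rank drop coming from fixed-point-freeness of $\rho_\sigma$ on the maximal-rank stabilizers. The only cosmetic difference is that you carry the product structure through the induction, whereas the paper obtains it at the end by citing Proposition 2.8 of Adem--Smith; the substance is the same.
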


\begin{proof}
 The proof follows \cite{connolly}.
We refer the reader to \cite{Unlu} for the details.
Write
$S^{2n-1}_{\sigma}$ for the linear sphere associated to $\rho_{\sigma}$.
We want to glue these spheres into a $G$-equivariant spherical
fibration over $X$. We will proceed by induction over the skeleton
of $X$.
For every $G$-orbit of the 0-skeleton, choose a representant $\sigma$
and define a map $G\times_{G_{\sigma}} S^{2n-1}_{\sigma}\rightarrow
X^0$ by $(g,x)\mapsto g\cdot\sigma$. This defines a
$G$-equivariant spherical fibration $S^{2n-1}\rightarrow E_0\rightarrow
Sk_0(X)$ whose total space is a finite dimensional $G$-CW-complex.
Clearly if $\rho_{\sigma}$ is fixed point free
for all $G_{\sigma}$ with $rk(G_{\sigma})=rk_X(G)$,
then $rk(E_0)=rk_X(G)-1$.

The inductive step is next. Suppose given a $G$-equivariant spherical fibration
over the $(k-1)$-skeleton $\ast^{q_{k-1}}S^{2n-1}\rightarrow E_{k-1}\rightarrow Sk_{k-1}(X)$
whose total space is a finite dimensional $G$-CW-complex.
Assume also that if $\rho_{\sigma}$ is fixed point free
for all $G_{\sigma}$ with $rk(G_{\sigma})=rk_X(G)$,
then $rk(E_{k-1})=rk_X(G)-1$.
Now, for every $G$-orbit of a $k$-cell, choose a representative $\sigma$.
The $G_{\sigma}$-equivariant fibration
$\ast^{q_{k-1}}S^{2n-1}\rightarrow E_{k-1}\arrowvert_{\partial\sigma}\rightarrow \partial\sigma$ is classified by
an element $a_{\sigma}\in \pi_{k-2}(Aut_{G_{\sigma}}(\ast^{q_{k-1}}S^{2n-1}))$.

We want to have $a_{\sigma}=0$:
Observe that, in general, for two complex $G$-spheres $S(V)$ and $S(W)$,
we have that $S(V\oplus W)\cong S(V)\ast S(W)$ as $G$-spheres.
Therefore, by lemma \ref{tec}, we can take
enough Whitney sums of the fibration
$\ast^{q_{k-1}}S^{2n-1}\rightarrow E_{k-1}\rightarrow Sk_{k-1}(X)$
to guarantee that $a_{\sigma}=0$ (see
lemma 2.3 and proposition 2.4 in \cite{connolly}). We
can then extend the $G_{\sigma}$-equivariant fibration
$\ast^{q_{k}}S^{2n-1}\rightarrow E_{k-1}\arrowvert_{\partial\sigma}
\rightarrow\partial\sigma$ equivariantly across the cell $\sigma$. We define a
$G$-equivariant spherical fibration over the orbit of $\sigma$ by
$G\times_{G_{\sigma}}\ast^{q_k} S^{2n-1}_{\sigma} \rightarrow G\sigma$ with $(g,x)\mapsto g\cdot\sigma$.

Repeating the procedure for all the representatives of
the $G$-orbits of $k$-cells,
we recover a $G$-equivariant spherical fibration
$\ast^{q_{k}}S^{2n-1}\rightarrow E_k\rightarrow Sk_k(X)$
with total space a finite dimensional $G$-CW-complex.
Clearly if $\rho_{\sigma}$ is fixed point free
for all $G_{\sigma}$ with $rk(G_{\sigma})=rk_X(G)$,
then $rk(E_k)=rk_X(G)-1$.
We conclude noticing that,
by proposition 2.8 in \cite{adem1}, up to taking further fiber
joins, we can assume that the total fibration $\ast^{q}S^{2n-1}\rightarrow
E\rightarrow X$ is a product one.

For the last statement, one can observe that
all the constructions take place in the category
of finite CW-complexes, providing that the initial space $X$ is a finite CW-complex.
\end{proof}

\section{Rank 3 p-groups (p odd).}

The results of this section have also been announced by
Jackson in \cite{jackson}. We give a proof
which uses the group theory developed there.
For the convenience of the reader we reproduce it here.

\begin{lem} \label{michio}
If $G$ is a finite $p$-group with $rk(G)=3$ and $rk(Z(G))=1$,
then there exists a normal abelian subgroup $Q<G$ of type $(p,p)$
with $Q\cap Z(G)\neq 0$.
\end{lem}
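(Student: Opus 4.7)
The plan is to produce a normal elementary abelian $p$-subgroup $V$ of $G$ of rank at least $2$ containing the central element of order $p$, and then cut it down to an $\IF_p[G]$-submodule of dimension exactly $2$. Write $z$ for a generator of the unique subgroup of order $p$ of the cyclic center $Z(G)$.

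First I would let $A\triangleleft G$ be a maximal normal abelian subgroup; the standard $p$-group fact gives $C_G(A)=A$. If $A$ were cyclic, conjugation would embed $G/A$ into $\mathrm{Aut}(A)$, which is cyclic because $p$ is odd, so $G$ would be metacyclic and hence of rank at most $2$, contradicting $rk(G)=3$. Therefore $A$ is non-cyclic, and $V:=\Omega_1(A)$ is a normal elementary abelian subgroup of $G$ of rank at least $2$. Since $V$ is a nontrivial normal subgroup of a $p$-group it meets $Z(G)$ nontrivially; as $V$ has exponent $p$, this intersection contains $z$.

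To descend to rank exactly $2$, I view $V$ as an $\IF_p[G]$-module and look at the quotient $V/\langle z\rangle$, which is nonzero. Because $G$ is a $p$-group acting on a nonzero $\IF_p$-vector space, there is a nonzero $G$-fixed vector $\bar{x}\in V/\langle z\rangle$. Lifting to $x\in V$ we get $[g,x]\in\langle z\rangle$ for every $g\in G$, so $Q:=\langle z,x\rangle$ is normal in $G$, abelian of type $(p,p)$, and contains $z\in Z(G)$. Properness of $Q$ is immediate since $|G|\geq p^3$.

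The only step that genuinely uses oddness of $p$ is the metacyclic-rank dichotomy, through the cyclicity of $\mathrm{Aut}(\IZ/p^n)$; I expect that to be the main conceptual input, and the rest is routine $\IF_p[G]$-module bookkeeping together with the standard intersection-with-center observation.
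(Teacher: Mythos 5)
Your proof is correct, but it takes a genuinely different route from the paper: the paper gives no argument at all for this lemma and simply cites Suzuki, Section 4, whereas you supply a complete elementary proof. Your chain of reasoning is sound at every step: a maximal normal abelian subgroup $A$ of a $p$-group satisfies $C_G(A)=A$; if $A$ were cyclic then $G/A\hookrightarrow \mathrm{Aut}(A)$, which is cyclic for $p$ odd, forcing $G$ to be metacyclic and hence of rank at most $2$ (every subgroup of a metacyclic group is metacyclic), contradicting $rk(G)=3$; thus $V=\Omega_1(A)$ is a normal elementary abelian subgroup of rank at least $2$, it meets $Z(G)$ in the subgroup $\langle z\rangle$ of order $p$, and the standard fixed-point lemma for a $p$-group acting on the nonzero $\IF_p$-vector space $V/\langle z\rangle$ produces $x$ with $[g,x]\in\langle z\rangle$ for all $g$, so that $Q=\langle z,x\rangle$ is normal of type $(p,p)$ and meets the center. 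What your approach buys is self-containedness and a slightly stronger statement: you only use that $G$ is not metacyclic (so $rk(G)\geq 3$ suffices, and the hypothesis $rk(Z(G))=1$ is not really needed, since the claim is trivial when the center has rank at least $2$), at the cost of invoking oddness of $p$ through the cyclicity of $\mathrm{Aut}(\mathbb{Z}/p^n)$ --- which is consistent with the standing assumption of the section, though you should flag that the argument as written does not cover $p=2$, where the cited Suzuki reference is usually formulated differently.
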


\begin{proof}
This is proven by Suzuki in \cite{suzuki}, section 4.
\end{proof}

\begin{prop} \label{class}
Let $G$ be a finite $p$-group with $p>2$, $rk(G)=3$ and
$rk(Z(G))=1$. Let $Q$ be an abelian normal subgroup of type $(p,p)$
as above. Suppose that $H<G$ with $H\cap Z(G)=0$ and $|H|=p^n$. Then
either $H$ is cyclic, $H< C_G(Q)$, $H$ is abelian of type
$(p,p^{n-1})$ or $H\cong M(p^n)=<x,y|x^{p^{n-1}}=y^p=1,
y^{-1}xy=x^{1+p^{n-2}}>$.
\end{prop}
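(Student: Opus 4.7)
\emph{Plan.} I would analyze the conjugation action of $G$ on the normal subgroup $Q$ through the homomorphism $\rho : G \to Aut(Q) = GL_2(\IF_p)$. Since $G$ is a $p$-group and a Sylow $p$-subgroup of $GL_2(\IF_p)$ has order $p$, the index $[G:C_G(Q)]$ lies in $\{1,p\}$; the value $1$ is impossible because it would put $Q\subseteq Z(G)$, contradicting $rk(Z(G))=1<rk(Q)=2$. Thus $[G:C_G(Q)]=p$. For our subgroup $H$, set $H_1 := H\cap C_G(Q)$, so that $[H:H_1]\in\{1,p\}$. When $H=H_1$ we land in the listed case $H<C_G(Q)$, and the remainder of the proof handles $[H:H_1]=p$ and aims to show $H$ is then cyclic, abelian of type $(p,p^{n-1})$, or isomorphic to $M(p^n)$.

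First I would show that $H\cap Q=1$. Let $z$ generate $Q\cap Z(G)$; since $H\cap Z(G)=0$, we have $z\notin H$, which already rules out $H\cap Q\in\{Q,\langle z\rangle\}$. If instead $H\cap Q=\langle y\rangle$ for some $y\notin\langle z\rangle$, pick $h\in H\setminus C_G(Q)$: its image $\rho(h)$ is a non-trivial $p$-power-order element of $GL_2(\IF_p)$ fixing $z$, hence unipotent with unique fixed line $\langle z\rangle$, so $hyh^{-1}=yz^{\alpha}$ for some $\alpha\neq 0$. Then $[h,y]=z^{\alpha}$ is a non-trivial element of $\langle z\rangle\cap H$, forcing $z\in H$, contradiction. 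Hence $H\cap Q=1$.

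Next I would show that $H_1$ is cyclic. By construction $H_1$ centralizes $Q$, and $H_1\cap Q\subseteq H\cap Q=1$. If $H_1$ contained an elementary abelian subgroup $E$ of rank $2$, then $E\cdot Q$ would be an elementary abelian subgroup of $G$ of rank $4$, contradicting $rk(G)=3$; therefore $H_1$ contains no $(\IZ/p)^2$, and for $p$ odd this forces $H_1$ to be cyclic of order $p^{n-1}$. Consequently $H$ is a $p$-group of order $p^n$ admitting a cyclic maximal subgroup, and the classical classification of such groups for odd $p$ (see e.g.\ Huppert, \emph{Endliche Gruppen I}, Satz III.14.9) leaves exactly the three possibilities $\IZ/p^n$, $\IZ/p\times\IZ/p^{n-1}$, and $M(p^n)$, completing the case analysis. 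The principal obstacle is extracting the rank-based contradiction against $(\IZ/p)^2\subseteq H_1$; once that is in hand, the rest of the argument is bookkeeping with the action on $Q$ together with an appeal to the standard classification.
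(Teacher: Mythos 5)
Your proposal is correct and follows essentially the same route as the paper: establish $[G:C_G(Q)]=p$, reduce to the case $H\not<C_G(Q)$ where $H\cap Q$ must be trivial, show $K=H\cap C_G(Q)$ is cyclic of index $p$ via the rank-$3$ hypothesis, and invoke the classification of $p$-groups with a cyclic maximal subgroup. The only (immaterial) differences are that you compute $[G:C_G(Q)]$ via the Sylow $p$-subgroup of $GL_2(\mathbb{F}_p)$ rather than by orbit counting, and you treat the case $H\cap Q\neq 0$ by the contrapositive commutator argument rather than by the coprimality of $|Aut(\mathbb{Z}/p)|$ with $p$.
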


\begin{proof}
If $rk(H)=1$ then $H$ is cyclic since $p>2$. Suppose that $rk(H)=2$
and $H\cap Q\neq 0$. By assumption $Z(G)\cap Q=\Z/p$, $H\cap Z(G)=0$
and $Q\cap H=\Z/p$. The map $c: H\rightarrow Aut(H\cap Q)$ given by
$c_h(x)=hxh^{-1}$ is well defined because $Q$ is normal (by lemma
\ref{michio}). Since $|H|=p^n$ and $|Aut(H\cap Q)|=p-1$, we have
that the map $c$ is trivial. As a result we have that $H<C_G(Q)$.

Assume now that $H\cap Q=0$. In this case $H\cap C_G(Q)\neq H$ since
otherwise we would have $rk(G)>3$. Set $K=H\cap C_G(Q)$ and observe
that $K$ is cyclic (else we would have $rk(G)>3$). Assume for a
moment that $\left[G:C_G(Q)\right]=p$. In this case
$\left[H:K\right]=p$, in other words $H$ has a maximal  cyclic
subgroup. By \cite{suzuki} section 4, $H$ needs then to be abelian
of type $(p,p^{n-1})$ or $M(p^n)$.

We still have to prove that $\left[G:C_G(Q)\right]=p$. The group $G$
acts on $Q$ by conjugation and for each element $q$ of $Q$ not in
center of $G$, we have that $G_q=C_G(Q)$. As a result
$|C_G(Q)|=|G_q|=|G|/p$ since $Q\cong (Z/p)^2$ with the first
coordinate in the center $Z(G)$.
\end{proof}

\begin{prop}  \label{classfct}
Let $G$ be a finite $p$-group with $p>2$, $rk(G)=3$ and
$rk(Z(G))=1$. There exists a class function $\beta: G \rightarrow
\C$ such that for any subgroup $H\subset G$, with $H\cap Z(G)=0$,
the restriction $\beta\arrowvert_{H}$ is a complex character of $H$.
If in addition $H$ is a rank two elementary abelian subgroup, then
the character $\beta\arrowvert_H$ corresponds to an isomorphism
class of fixed-point free representations.
\end{prop}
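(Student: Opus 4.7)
The plan is to use the normal subgroup $Q\triangleleft G$ provided by Lemma \ref{michio} together with the classification in Proposition \ref{class} to realize $\beta$ as an induced character (possibly corrected by subtracting further induced characters). Writing $\langle z\rangle := Q\cap Z(G)$, I would begin by choosing a one-dimensional character $\chi\colon Q\to\C^\times$ that is faithful on $\langle z\rangle$, so that $\chi(z)$ is a primitive $p$-th root of unity; such a $\chi$ exists because the restriction map on characters along $\langle z\rangle\hookrightarrow Q$ is surjective. The natural first candidate is $\beta=\mathrm{Ind}_Q^G\chi$: because this is already a genuine character of $G$, its restriction to any subgroup $H$ is automatically a character, which takes care of the first half of the statement.

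For the fixed-point-free half, I would unpack the restriction using Mackey. Because $Q$ is normal in $G$,
$$\beta|_H \;=\; \bigoplus_{g\in H\backslash G/Q}\mathrm{Ind}_{H\cap Q}^H\bigl(\chi^g|_{H\cap Q}\bigr),$$
so the analysis reduces to controlling $H\cap Q$ and the $G$-orbit of $\chi$. The hypothesis $H\cap Z(G)=0$ forces $H\cap Q\ne\langle z\rangle$, while the choice $\chi(z)\ne 1$ keeps $\chi$ sensitive to the central direction of $Q$. Combined with the four possibilities in Proposition \ref{class} (namely $H$ cyclic, $H\le C_G(Q)$, $H$ abelian of type $(p,p^{n-1})$, or $H\cong M(p^n)$), this pins down each Mackey summand case by case; in the rank-two elementary abelian situation, only the sub-cases $H\le C_G(Q)$ and $H\cap Q=1$ with $[H:H\cap C_G(Q)]=p$ need to be considered. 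The key structural input making this feasible is the equality $[G:C_G(Q)]=p$ recorded inside the proof of \ref{class}, so that $G$-conjugation of $\chi$ sweeps out a $p$-element orbit.

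If the naive $\beta=\mathrm{Ind}_Q^G\chi$ fails to be fixed-point-free on some $H$ (typically because its restriction to an $H$ with $H\cap Q=1$ is a multiple of the regular representation of $H$ and therefore contains a trivial summand), I would correct $\beta$ by subtracting a carefully chosen $\mathbb{Z}$-combination of induced characters from $C_G(Q)$ or from $Q$ itself, designed so that the corrections cancel the unwanted trivial constituents on every relevant $H$ without turning the result into a virtual character on any $H$ with $H\cap Z(G)=0$. The main obstacle is precisely this simultaneous balancing: $\beta|_H$ must remain an honest (non-virtual) character on every $H$ with $H\cap Z(G)=0$, including the non-abelian modular $M(p^n)$, and must realize a fixed-point-free representation on each rank-two elementary abelian one. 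Tracking the Mackey decomposition across all four cases of Proposition \ref{class} and verifying that the corrections preserve positivity everywhere is what I expect to be the delicate part of the argument.
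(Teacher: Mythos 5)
There is a genuine gap here: your write-up is a plan rather than a proof, and the step you defer is the entire content of the proposition. You correctly identify the structural inputs (the chain $0<Z(G)<Q<C_G(Q)<G$, the classification of Proposition \ref{class}, and $[G:C_G(Q)]=p$), and you correctly diagnose that the naive candidate $\mathrm{Ind}_Q^G\chi$ fails: since $Q$ is normal, this induced character is supported on $Q$, so for any $H$ with $H\cap Q=0$ its restriction is a multiple of the regular character of $H$ and contains trivial summands. Such $H$ genuinely occur (cyclic $H$ with $H\cap C_G(Q)=0$, abelian $H$ of type $(p,p^{n-1})$, and $M(p^n)$ all arise in Proposition \ref{class}), so the correction terms are unavoidable. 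But you never exhibit them. Saying you would subtract ``a carefully chosen $\mathbb{Z}$-combination of induced characters \dots designed so that the corrections cancel the unwanted trivial constituents \dots without turning the result into a virtual character'' and that verifying positivity ``is what I expect to be the delicate part'' is precisely an admission that the proof has not been carried out: the delicate part is the proposition.

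For comparison, the paper does not start from a character of $G$ at all. It writes down $\beta$ explicitly by its values on the strata of the normal chain --- $(p^2-p)|G|$ at the identity, $0$ on $Z(G)\setminus 0$, $-p|G|$ on $Q\setminus Z(G)$, $0$ on $C_G(Q)\setminus Q$, and, crucially, a value on $G\setminus C_G(Q)$ that depends on the order of the element ($-|G|$ for order $p$, $0$ otherwise) --- and then, for each of the subgroup types allowed by Proposition \ref{class}, exhibits an explicit nonnegative combination of induced characters equal to $\beta\arrowvert_H$ (for $M(p^n)$ this uses induction from the normal subgroup $N=\langle x^{p^{n-2}},y\rangle\cong(\Z/p)^2$; for type $(p,p^{n-1})$ it uses the $p-1$ order-$p$ subgroups $H_i=\langle xy^{ip^{n-2}}\rangle$ missing $C_G(Q)$). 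The order-sensitivity outside $C_G(Q)$ is exactly what your Mackey decomposition of $\mathrm{Ind}_Q^G\chi$ cannot see, and it is what makes the restrictions to the non-elementary-abelian subgroups honest characters. Finally, for rank-two elementary abelian $H$ the paper writes $\beta\arrowvert_H$ as an explicit nonnegative sum of products $\phi_i\phi_j$ with $j\neq 0$ in all but one harmless summand, which is the fixed-point-freeness check. To repair your argument you would have to produce the correction terms explicitly and run these same case-by-case positivity verifications; at that point you would have reconstructed the paper's $\beta$.
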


\begin{proof}
Define $\beta :G \rightarrow\C$ as follows:
$$x\mapsto \left\{
                      \begin{array}{ll}
                        (p^2-p)|G|, & \hbox{if $x=0$;} \\
                        0, & \hbox{if $x\in Z(G)\setminus 0$;} \\
                        -p|G|, & \hbox{if $x\in Q\setminus Z(G)$;} \\
                         0, & \hbox{if $x\in C_G(Q)\setminus Q$;} \\
                        -|G|, & \hbox{if $x\in G\setminus C_G(Q)$ of order p;}\\
        0 & \hbox{if $x\in G\setminus C_G(Q)$ of order greater than p.}
                      \end{array}
                    \right.$$
The map $\beta$ is a class function because we have the following
sequence of subgroups  each normal in $G$:
$$0<Z(G)<Q<C_G(Q)<G.$$ Consider first an elementary
abelian subgroup $H$ of $G$ of rank 2 and which intersects trivially
the center $Z(G)$. If $H\cap Q\neq 0$ then:
$$\beta\arrowvert_H =|G|\displaystyle\sum_{i=0}^{p-1}\displaystyle\sum_{j=1}^{p-1}\phi_i\phi_j$$
where $\phi_k$ is the $k$-th irreducible character of $\Z /p$. If
$H\cap Q=0$ then:
$$\beta\arrowvert_H =(p-1)|G|/p\left(\displaystyle\sum_{i=0}^{p-1}\displaystyle\sum_{j=1}^{p-1}\phi_i\phi_j\right)+
    |G|\displaystyle\sum_{i=1}^{p-1}\phi_i\phi_0.$$
Consider now a subgroup $H$ of $G$ with $H\cap Z(G)=0$. We will
proceed case by case using the classification above.
\begin{enumerate}
\item If $H\cap Q\neq 0$ then $H\subset C_G(Q)$
    and $|K|=p$ with $K=Q\cap H$. Let $\phi$ be
    the character of $K$ which is $p-1$ on the identity
    and $-1$ for each other element of $K$. Then:
    $$\beta\arrowvert_H =\frac{p|G|}{|H:K|}Ind_K^H \phi.$$

\item If $H\cap Q =0$ and $H\subset C_G(Q)$ then
    $H$ is cyclic and
    $\beta\arrowvert_H= |G|/|H|(p^2-p)\phi$ where
    $\phi$ is the character of $H$ that is $|H|$ on the
    identity and $0$ elsewhere.

\item If $H\cap Q=0$ and $H$ is cyclic with $H\cap C_G(Q)=0$,
    then $|H|=p$ and $\beta\arrowvert_H =|G|/p \phi$ where
    $\phi$ is $(p^3-p^2)$ on the identity and $-p$ elsewhere.

\item Assume that $H\cap Q=0$ and that $H$ is abelian of type
    $(p,p^{n-1})$. Write $H=<x,y\in H|x^p=y^{p^{n-1}}=1, \left[x,y\right]=1>$.
    Notice that $<y>=H\cap C_G(Q)$. For each $1\leq i\leq p-1$ set
    $H_i=<xy^{ip^{n-2}}>$. Clearly $|H_i|=p$, $H_i\cap C_G(Q)=0$
    and $H_i\cap H_j=0$ if $i\neq j$.

    Let $\phi_i$ be the character of $H_i$ which is $p-1$ on
    the identity and $-1$ elsewhere. Set:
    $$\phi=\displaystyle\sum_{i=1}^{p-1}Ind_{H_i}^H\phi_i.$$
    Since $\phi (1)=|H|(p-1)$, $\phi(z)=-|H|/p$  for $z\in H\setminus <y>$ and
    $\phi (z)=0$ for $z\in <y>$; we conclude that $\beta\arrowvert_H= p|G|/|H|\phi$.

\item If $H\cap Q=0$ and $H\cong M(p^n)$,
    we can write $H=<x,y|x^{p^{n-1}}=y^p=1, y^{-1}xy=x^{1+p^{n+2}}>$.
    Let $N=<x^{p^{n-2}},y>\cong (\Z/p)^2$ which is normal in $H$.
    Let $\phi$ be the character of $H$ which is $p-1$ on the identity and such that
    $\phi(x^i)=-1$ for all $1\leq i\leq p-1$ and $\phi(z) =0$ when $z\in N\setminus <y>$.
    Then $\beta\arrowvert_H =Ind_N^H p|G|/|H:N|\phi$.
\end{enumerate}
\end{proof}

We can now turn our attention to the topological problem:

\begin{prop}
For every odd order rank 3 p-group $G$, there is a finite
dimensional $G$-CW-complex $X\simeq S^m\times S^n$ with cyclic
isotropy subgroups.
\end{prop}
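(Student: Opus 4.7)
The plan is to apply Proposition \ref{constr} with $X$ a suitably chosen $G$-CW-complex $Y\simeq S^m$, using the class function $\beta$ from Proposition \ref{classfct} to supply the compatible family of representations required by Proposition \ref{constr}. I focus on the hard case $rk(Z(G))=1$, for which the group-theoretic setup of Lemma \ref{michio} and Propositions \ref{class}, \ref{classfct} is tailored; when $rk(Z(G))\geq 2$ the desired $X$ can be built more directly, using a central rank-2 elementary abelian subgroup and a faithful linear $G$-sphere.

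First I would pick a complex $G$-representation $V$ satisfying (i) on every irreducible summand, $Z(G)$ acts through a faithful central character, and (ii) $V^E=0$ for every rank-3 elementary abelian subgroup $E\leq G$. Condition (i) is satisfied by any representation built as a direct sum of irreducible summands of $\mathrm{Ind}_{Z(G)}^G\chi$ for $\chi$ a faithful character of the cyclic group $Z(G)$. Condition (ii) imposes only finitely many constraints (one per conjugacy class of rank-3 elementary abelian subgroups) and can be secured by adding further well-chosen irreducible summands of the above type. Set $Y=S(V)$. By (i), every non-trivial central element acts on $V$ as a non-identity scalar, so every isotropy subgroup $G_\sigma$ of $Y$ satisfies $G_\sigma\cap Z(G)=0$; by (ii), $rk(G_\sigma)\leq 2$. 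In particular, Proposition \ref{class} applies to every isotropy $G_\sigma$ of $Y$, and $rk_Y(G)\leq 2$.

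Next I would apply Proposition \ref{constr} to $Y$ by assigning to each isotropy $G_\sigma$ the complex representation $\rho_\sigma$ whose character is an integer multiple (uniform in $\sigma$) of $\beta|_{G_\sigma}$, chosen so that all $\rho_\sigma$ have a common dimension $n$. Because $G_\sigma\cap Z(G)=0$, Proposition \ref{classfct} guarantees that $\beta|_{G_\sigma}$ is a genuine character of $G_\sigma$; because every $\rho_\sigma$ is a restriction of the single class function $\beta$ on $G$, the compatibility $\rho_\sigma|_{G_\tau}\cong\rho_\tau$ whenever $G_\tau<G_\sigma$ is automatic; and because every rank-2 elementary abelian isotropy is fixed-point-free for $\rho_\sigma$ by the last clause of Proposition \ref{classfct}, the hypothesis of Proposition \ref{constr} on maximal isotropy is met. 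The conclusion of Proposition \ref{constr} then supplies a finite-dimensional $G$-CW-complex $X\cong Y\times S^n$ with $rk_X(G)=rk_Y(G)-1\leq 1$, i.e., with cyclic isotropy subgroups, completing the proof.

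The main obstacle I anticipate is ensuring that the rank-2 maximal isotropy of $Y$ only occurs at rank-2 elementary abelian subgroups, so that Proposition \ref{classfct} directly guarantees fixed-point-freeness of $\rho_\sigma$ at every maximal isotropy. If a rank-2 maximal isotropy $H$ appears of type abelian $(p,p^{n-1})$ with $n\geq 3$ or of type $M(p^n)$, one must either verify fixed-point-freeness of $\beta|_H$ directly from the explicit formulas in the proof of Proposition \ref{classfct} (by checking $\sum_k\beta(g^k)=0$ for each non-identity $g\in H$ using the classification of cyclic subgroups of $H$ relative to $C_G(Q)$ and $Q$), or replace $V$ by a refined version that excludes such $H$ from occurring as a maximal isotropy, for instance by adding irreducible summands that strictly enlarge $V^{H'}$ for some proper $H'<H$ of the same rank.
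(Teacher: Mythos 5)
Your proof is correct and takes essentially the same route as the paper: induce a linear $G$-sphere from a faithful character of the cyclic center so that every isotropy subgroup meets $Z(G)$ trivially (whence every isotropy subgroup has rank at most $2$, since any rank-$3$ elementary abelian subgroup must contain the unique central subgroup of order $p$ --- so your condition (ii) is automatic from (i)), and then feed the class function $\beta$ of Proposition \ref{classfct} into Proposition \ref{constr}. The only divergence is the obstacle you raise at the end, which in fact dissolves: what the application of Proposition \ref{constr} really requires is that no rank-$2$ subgroup of a maximal isotropy group $G_\sigma$ fixes a point of $S(\rho_\sigma)$, and since any such subgroup contains a rank-$2$ elementary abelian $E\leq G_\sigma$, which automatically satisfies $E\cap Z(G)=0$, the last clause of Proposition \ref{classfct} applied to $E$ itself gives that $(\beta|_{G_\sigma})|_{E}=\beta|_{E}$ is fixed-point free, which is exactly what is needed --- no case-by-case verification for $H$ of type $(p,p^{n-1})$ or $M(p^n)$, and no modification of $V$, is necessary.
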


\begin{proof}
If $Z(G)$ is not cyclic, then it is enough to consider the linear
spheres of representations of $G$ induced from free
representations of some $\IZ /p\times\IZ /p <Z(G)$.

Assume that $Z(G)$ is cyclic and let $S^m$ be the linear $G$-sphere
obtained by inducing from a free linear action of $Z(G)$. The
isotropy subgroups for this action are the one described in
proposition \ref{class}. The conditions of proposition
\ref{constr} are fulfilled by proposition \ref{classfct}. The conclusion follows.
\end{proof}

As a direct consequence of theorem 3.2 in \cite{adem1} we obtain:

\begin{thm} \label{yetagain}
For every odd order rank 3 p-group $G$, there is a free finite
$G$-CW-complex $X\cong S^m\times S^n\times S^k$.
\end{thm}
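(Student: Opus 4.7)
The plan is to combine the immediately preceding Proposition with Theorem 3.2 of \cite{adem1}. First, I would apply that Proposition to obtain a $G$-CW-complex $X \simeq S^m \times S^n$ all of whose isotropy subgroups are cyclic. Inspecting the construction, which feeds a linear $G$-sphere into Proposition \ref{constr}, one sees that the resulting $X$ is in fact a finite $G$-CW-complex: a linear sphere is a finite $G$-CW-complex, and the last clause of Proposition \ref{constr} preserves finiteness.

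Next, I would invoke Theorem 3.2 of \cite{adem1}. Since $p$ is odd, every cyclic $p$-subgroup has periodic cohomology and admits fixed-point free complex representations, so the hypotheses of that theorem are satisfied by our finite $X$. The theorem produces a finite free $G$-CW-complex of the form $X \times S^k$ by precisely the strategy axiomatized in our Proposition \ref{constr}: one assigns to each cyclic isotropy $G_{\sigma}$ a fixed-point free complex representation $\rho_{\sigma}$, checks the restriction compatibility along inclusions of isotropies (which is immediate here, since one can restrict a single faithful complex representation of $G$ that is free on cyclic subgroups), glues the corresponding linear spheres into a $G$-equivariant spherical fibration over $X$, and kills the cell-by-cell obstructions in $\pi_k(Aut_{G_{\sigma}}(\ast^{q}S^{2n-1}))$ by taking enough Whitney sums, exactly as in Proposition \ref{tec}. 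A final application of the fiber join trick (Proposition 2.8 of \cite{adem1}) converts the total spherical fibration into a product.

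Applying this to our $X \simeq S^m \times S^n$ yields a finite free $G$-CW-complex $Y \simeq S^m \times S^n \times S^k$, which is the desired conclusion. The substantive work has already been carried out in Section 2 and in the previous Proposition; the only point that requires attention in this step is the finiteness of the intermediate $X$, and once that is established the invocation of \cite{adem1}, Theorem 3.2 is formal. The main obstacle is therefore bookkeeping rather than conceptual, which is why the theorem is recorded as a direct consequence.
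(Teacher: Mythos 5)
Your proposal matches the paper's argument exactly: the paper records this theorem as a direct consequence of the preceding proposition (which supplies a finite-dimensional $G$-CW-complex $X\simeq S^m\times S^n$ with cyclic isotropy) together with Theorem 3.2 of \cite{adem1}, which converts any finite-dimensional complex with periodic isotropy into a free finite complex of the form $X\times S^k$. One small caveat: your parenthetical gloss that the compatible family of representations comes from restricting a single faithful representation of $G$ that is free on all cyclic subgroups cannot be literally right (such a representation would be free, forcing $rk(G)=1$), but this does not matter since you are correctly invoking \cite{adem1} as a black box and its hypotheses only require finite-dimensionality and periodic isotropy.
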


Note that a converse to theorem \ref{yetagain} is given by Hanke in
\cite{hanke} in the sense that: if $(\IZ/p)^r$ acts freely on $X=
S^{n_1}\times ... \times S^{n_k}$ and if $p>3dim(X)$, then $r\leq
k$.

\begin{rem}
For $p=2$ the situation is more complicated
because of the classification of subgroups. A 2-group
of rank 1 can be either cyclic or generalized quaternion.
A 2-group with a maximal abelian subgroup can be cyclic,
generalized quaternion, dihedral, $M(2^n)$ (see proposition
\ref{class}) or $S_{4m}=<x,y|x^{2m}=y^2=1,
y^{-1}xy=x^{m-1}>$ (see \cite{suzuki} section 4 chapter 4).
For $p=2$, the class function of proposition \ref{classfct}
does not restrict to characters over
the subgroups, in general.
\end{rem}

\section{Abelian isotropy p-groups.}

We begin by generalizing theorem 3.2 in \cite{adem1}
for p-groups.

\begin{thm}
Let $G$ be a finite p-group and let $X$ be a
finite dimensional $G$-CW-complex with
$G_{\sigma}$ abelian for all cells $\sigma\subset X$. Then there is
a free finite dimensional $G$-CW-complex
$Y\simeq X\times S^{n_1}\times ... \times S^{rk_X(G)}$.
Moreover, if $X$ is finite, then $Y$ is finite as well.
\end{thm}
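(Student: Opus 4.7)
I would argue by induction on $r=rk_X(G)$, at each step adding a sphere to reduce the isotropy rank. The base case $r=0$ is immediate, since then $X$ is already free and we set $Y=X$. For $r\geq 1$, the plan is to produce a $G$-CW-complex $X_1\simeq X\times S^m$ with abelian isotropy and $rk_{X_1}(G)\leq r-1$; the induction hypothesis applied to $X_1$ then yields a free $Y\simeq X\times S^m\times S^{n_2}\times\cdots\times S^{n_r}$. Subgroups of abelian groups are abelian, so the new isotropies of $X_1$ are automatically abelian and the hypothesis is preserved through the induction. Finiteness of $X$ is preserved by the last clause of Proposition \ref{constr}.

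To produce $X_1$ the natural approach is to apply Proposition \ref{constr} to $X$ with a coherent family of complex representations $\rho_\sigma:G_\sigma\to U(n)$, but this runs into the obstruction that a non-cyclic abelian group admits no fixed-point-free complex representation. A closer look at the proof of Proposition \ref{constr} shows, however, that the fixed-point-free hypothesis is invoked only to control the isotropy rank of the glued bundle $E$; the cell-by-cell extension of the $G$-equivariant spherical fibration itself depends only on Proposition \ref{tec} together with the fact that the fibers are Whitney sums of complex representation spheres. Accordingly, the same construction produces $E\simeq X\times S^m$ with $rk_E(G)\leq r-1$ under the weaker assumption that, for every top-rank $G_\sigma$, every stabilizer of the $G_\sigma$-action on $S(\rho_\sigma)$ has rank strictly less than $r$. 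When $G_\sigma=A$ is abelian with character decomposition $\rho_\sigma=\bigoplus_i\chi_i$, a stabilizer of $v$ is $\bigcap_{i:v_i\neq 0}\ker\chi_i$, so the condition becomes that each $\ker\chi_i\subset A$ has rank strictly less than $rk(A)=r$.

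The remaining task is to construct a coherent family with this property. I would do so by choosing a single global representation $\rho:G\to U(N)$ and setting $\rho_\sigma=\rho|_{G_\sigma}$, so that the coherence $\rho_\sigma|_{G_\tau}\cong\rho_\tau$ holds automatically. For each $G$-conjugacy class of top-rank abelian isotropy $A$, pick a character $\chi_A$ of $A$ whose order equals the exponent of $A$; then $\ker\chi_A$ has rank $r-1$. Take $\rho=\bigoplus_{(A)}\mathrm{Ind}_A^G\chi_A$, enlarging by further Whitney summands if necessary so that for every top-rank abelian $B$, the restriction $\rho|_B$ decomposes into characters each of kernel of rank less than $r$. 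The hard part is verifying this last property: by Mackey,
$$\mathrm{Ind}_A^G\chi_A\bigm|_B=\bigoplus_{g\in B\backslash G/A}\mathrm{Ind}_{A^g\cap B}^B\bigl(\chi_A^g\bigm|_{A^g\cap B}\bigr),$$
and one must rule out that any of these induced pieces yields a character of $B$ of rank-$r$ kernel, for every top-rank abelian $B$ simultaneously. This is a finite combinatorial check on the lattice of abelian subgroups of $G$, in which the $p$-group hypothesis enters decisively through the Mackey formula and the structure theorem for abelian $p$-groups. Once the family is in hand, iterating the modified construction $r$ times yields the required free $Y\simeq X\times S^{n_1}\times\cdots\times S^{n_r}$.
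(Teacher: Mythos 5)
You have the right skeleton --- induction on $rk_X(G)$, with Proposition \ref{constr} doing the work at each stage --- and you correctly observe that for non-cyclic abelian isotropy the hypothesis of Proposition \ref{constr} has to be read as ``every stabilizer of the $G_\sigma$-action on $S(\rho_\sigma)$ has rank $<rk_X(G)$,'' since no non-cyclic group acts freely on a linear sphere. The gap is in how you produce the coherent family $\{\rho_\sigma\}$. You insist on realizing it as the restriction of a single honest representation $\rho:G\to U(N)$, and the ``finite combinatorial check'' you defer is not merely hard: it fails in general. Take $G=(\Z/p)^3$ and $X=S(V)$ with $V$ the reduced regular representation of $G$; then all isotropy groups are elementary abelian, every rank-two subgroup of $G$ occurs as an isotropy group, and $rk_X(G)=2$. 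Every nontrivial character $\chi$ of $G$ has $\ker\chi$ of rank two, and $\ker\chi$ is itself one of the top-rank isotropy groups $B$; the constituent $\chi|_B$ is then trivial, so the stabilizer of a vector in that summand is all of $B$ and the rank does not drop. Adding further Whitney summands only introduces more such bad constituents, so no global $\rho$ can satisfy your condition and the induction never gets started. (The same objection hits $\mathrm{Ind}_A^G\chi_A$ directly: whenever $A^g\cap B=1$ for some double coset, the Mackey summand is the full regular representation of $B$ and contains the trivial character of $B$.)

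The paper's way around this is the one idea your proposal is missing: the family $\{\rho_\sigma\}$ need not come from a representation of $G$, only from a class function. Define $\beta:G\to\C$ by $\beta(1)=|G|(p^{r}-1)$, $\beta(x)=-|G|$ if $o(x)=p$, and $\beta(x)=0$ otherwise, where $r=rk_X(G)$. For each abelian isotropy subgroup $A$, with $A_p\le A$ the subgroup of elements of order dividing $p$, one checks that $\beta|_A$ equals the genuine character $\frac{|G||A_p|}{|A|}\,\mathrm{Ind}_{A_p}^{A}(\rho_0\circ f)$, where $f:A_p\hookrightarrow(\Z/p)^{r}$ is any injection and $\rho_0$ is the reduced regular representation of $(\Z/p)^r$. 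Coherence $\rho_\sigma|_{G_\tau}\cong\rho_\tau$ is automatic because all the $\rho_\sigma$ are restrictions of one fixed class function on $G$, and when $rk(A)=r$ the restriction to $A_p\cong(\Z/p)^r$ is a multiple of the reduced regular representation, so every stabilizer meets $A_p$ in a subgroup of rank at most $r-1$ and hence itself has rank at most $r-1$. This is the abelian-subgroup analogue of the class function of Proposition \ref{classfct}, and it sidesteps the global realizability problem entirely. (The paper also starts the induction at $rk_X(G)=1$ by quoting Adem--Smith rather than at $0$, but that difference is immaterial.)
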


\begin{proof}
We prove the theorem by induction over $rk_X(G)$. If $rk_X(G)=1$,
the theorem has been proven by Adem and Smith (3.2 in \cite{adem1}).

The inductive step follows. By virtue of
proposition \ref{constr}, we only need to associate to
each isotropy subgroup $G_{\sigma}$
a representation
$\rho_{\sigma}:G_{\sigma} \rightarrow U(m)$
such that
$\rho_{\sigma}\arrowvert_{G\tau}\cong
\rho_{\tau}$
whenever $G_{\tau}<G_{\sigma}$ and such that $
\rho_{\sigma}$ is
fixed point free for all $G_{\sigma}$ with
$rk(G_{\sigma})=rk_X(G)$.

Consider the
class function  $\beta :G \rightarrow\C$ given by:
$$x\mapsto \left\{
                      \begin{array}{ll}
                        |G|(p^{rk_X(G)}-1), & \hbox{if $x=0$;} \\
                        -|G|, & \hbox{if $o(x)=p$;} \\
                        0, & \hbox{otherwise.}
                      \end{array}
                    \right.$$
To simplify the notation write $A=G_{\sigma}$
for an isotropy subgroup (which is abelian by
hypothesis). We need to prove that $\beta\arrowvert_A$
is a character which is fixed point free whenever
$A\cong (\Z/p)^{rk_X(G)}$. Set
$A_p=\left\{0\right\}\cup\left\{x\in A| o(x)=p\right\}$.
Since $A$ is abelian we have $A_p\triangleleft A$.
Fix an injection $f:A_p\rightarrow (\Z/p)^{rk_X(G)}$.
Write $
\rho_0:(\Z/p)^{rk_X(G)}\rightarrow U(p^{rk_X(G)}-1)$
for the reduced regular representation and let $
\rho=
\rho_0\circ f$
be the representation $A_p\rightarrow (\Z/p)^{rk_X(G)}\rightarrow U(p^{rk_X(G)}-1)$.
Consider finally the representation of $A$ given by
$\eta=|G||A_p|/|A|Ind_{A_p}^A
\rho$.
Clearly $\eta (0)=|G|(p^{rk_X(G)}-1)$,
$\eta (x)=-|G|$ if $x\in A_p\setminus 0$ while
$\eta (x)=0$ if $x\notin A_p$. As a result $\beta\arrowvert_A=\eta$.
Let now $A\cong (\Z/p)^{rk_X(G)}$. Clearly
$\beta\arrowvert_A$ is a multiple of the reduced
regular representation, thus fixed point free.
\end{proof}

\begin{cor}
Let $G$ be a p-group. Assume that
$G$ is a central extension of abelians, then there is a free finite
 $G$-CW-complex $X\simeq S^{n_1}\times ...\times S^{n_{rk(G)}}$.
The result in particular holds for extraspecial p-groups.
\end{cor}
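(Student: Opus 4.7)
The plan is to reduce to the theorem of this section by building a finite $G$-CW-complex $X_0 \simeq S^{m_1} \times \cdots \times S^{m_t}$, where $t = rk(Z(G))$, which has abelian isotropy and satisfies $rk_{X_0}(G) = rk(G) - t$. Applying the theorem to $X_0$ then produces a free $G$-CW-complex homotopy equivalent to a product of $t + (rk(G) - t) = rk(G)$ spheres, as required.

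To construct $X_0$, first note that $G$ being a central extension of abelian groups is equivalent to $[G,G] \subset Z(G)$, so $G$ is nilpotent of class at most two. Write $Z = Z(G)$ and fix a decomposition $Z \cong \bigoplus_{i=1}^{t} \Z/p^{a_i}$. For each $i$, pick a character $\omega_i \colon Z \to \C^{*}$ that is faithful on the $i$-th summand and trivial on the others, so that $\bigcap_{i=1}^{t} \ker \omega_i = 0$, and let $V_i = Ind_Z^G \omega_i$. Since $Z$ is central, $Z$ acts on $V_i$ by the scalar $\omega_i$. Set $X_0 = S(V_1) \times \cdots \times S(V_t)$ with the diagonal $G$-action. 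For any isotropy subgroup $H$ at a point $(x_1,\dots,x_t) \in X_0$ and any $z \in H \cap Z$, the equality $\omega_i(z)\, x_i = z \cdot x_i = x_i$ forces $\omega_i(z) = 1$ for each $i$, so $H \cap Z \subset \bigcap_i \ker \omega_i = 0$, and the chain $[H,H] \subset [G,G] \cap H \subset Z \cap H = 0$ shows $H$ is abelian.

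For the rank identity $rk_{X_0}(G) = rk(G) - t$: the upper bound follows because any abelian $H$ with $H \cap Z = 0$ has $\Omega_1(H) \oplus \Omega_1(Z)$ elementary abelian in $G$, forcing $rk(H) = rk(\Omega_1(H)) \leq rk(G) - t$. For the lower bound, take a maximal elementary abelian $E \subset G$; since $\Omega_1(Z)$ is central and $p$-torsion, $E \cdot \Omega_1(Z)$ is elementary abelian and hence equals $E$ by maximality, so $E = \Omega_1(Z) \oplus E'$ for some elementary abelian $E' \subset G$ of rank $rk(G) - t$ with $E' \cap Z = 0$. A Mackey computation shows $V_i$ restricted to $E'$ is a positive multiple of the regular representation of $E'$ (using $E' \cap Z = 0$ and $Z$ central), so $V_i^{E'} \neq 0$ and $E'$ is contained in the isotropy at some point of $X_0$, witnessing the rank $rk(G) - t$. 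Applying the theorem of this section to $X_0$ yields a free finite $G$-CW-complex $Y \simeq X_0 \times S^{n_1} \times \cdots \times S^{n_{rk(G)-t}}$, a product of exactly $rk(G)$ spheres. The main obstacle is this rank-matching identity, coupling the abelian-rank upper bound with the Mackey-based production of a max-rank isotropy subgroup.
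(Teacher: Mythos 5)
Your proposal is correct and follows essentially the same route as the paper: build a product of $G$-spheres from characters of the center whose kernels intersect trivially, observe that the isotropy subgroups meet $Z(G)$ trivially (hence are abelian, since $[G,G]\subset Z(G)$) and that $rk_X(G)=rk(G)-rk(Z(G))$, then apply the theorem of this section. The paper states these two facts as ``clearly''; you have simply supplied the verifications (the $\Omega_1$ argument for the rank bound and the Mackey computation for realizing the maximal isotropy rank), which are accurate.
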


\begin{proof}
Let $X=S^{n_1}\times ...\times S^{n_{rk(Z(G))}}$
be the product of the $G$-spheres arising from suitable
representations of the center. Clearly
$rk_X(G)=rk(G)-rk(Z(G))$ and $G_{\sigma}$
is abelian. The conclusion follows.
\end{proof}

\section{Infinite groups.}

As pointed out in \cite{connolly}, there is a class of infinite
groups which is worth considering, when studying the rank conjecture
mentioned in the introduction (which is usually stated for finite
groups). This is the class of groups $\Gamma$ of finite virtual
cohomological dimension. Recall that, by definition, a group
$\Gamma$ has finite virtual cohomological dimension, if it has a
finite index subgroup $\Gamma '<\Gamma$ with finite cohomological
dimension (that is to say: $H^n(\Gamma ',M)=0$ for all coefficients
$M$ and for all $n$ big enough). Writing $vcd$ for virtual
cohomological dimension and $cd$ for cohomological dimension, one
can show that the number $vcd(\Gamma)=cd(\Gamma ')$ is well defined.
See for example \cite{brown} for background on groups with finite
virtual cohomological dimension. The crucial property that makes
them interesting to us is the following: for any such group $\Gamma$
there exists a finite dimensional $\Gamma$-CW-complex
$\mathfrak{E}\Gamma$ with $|\Gamma_x|<\infty$ for all $x\in
\mathfrak{E}\Gamma$.

It is already known that a group with finite virtual cohomological dimension,
which is countable and with rank 1 finite
subgroups, acts freely on a finite dimensional CW-complex $X\simeq S^m$ \cite{connolly}.
The next step would be to prove the analogue result
for groups $\Gamma$ with rank 2 finite subgroups.
The easiest examples to consider are amalgamated products
$\Gamma=G_1\ast_{G_0} G_2$, where $G_i$ is a finite group
for $i=0,1,2$ and $G_0<G_i$ for $i=1,2$. In this case,
for every finite subgroup $H<\Gamma$, there is $\gamma\in\Gamma$
such that $\gamma H \gamma^{-1}<G_i$ for $i=1$ or $i=2$ (see \cite{serre}).
In particular $rk(\Gamma)=max\left\{rk(G_1),rk(G_2)\right\}$.
The first attempt would be to find an effective $\Gamma$-sphere,
i.e. a $\Gamma$-sphere with rank 1 isotropy subgroups.
In the first subsection
we exhibit an amalgamation of two p-groups
which doesn't have an effective $\Gamma$-sphere.

Recall from \cite{ben1} that, for a finite group $G$, we have that $rk(G)=r$ if and only if
there are $r$ finite dimensional
$\IZ \left[ G \right]$-complexes $\textbf{C}_1,...,\textbf{C}_{r}$
such that $\textbf{C}=\textbf{C}_1\otimes...\otimes\textbf{C}_{r}$
is a complex of projective $\IZ \left[ G \right]$-modules with
$H^{\ast}(\textbf{C})\cong
    H^{\ast}(S^{n_1}\times ...\times S^{n_{r}})$.
In the second subsection we prove a similar result: for every group $\Gamma$
with $vcd(\Gamma )<\infty$, there is a finite dimensional,
contractible $\IZ \left[\Gamma\right]$-complex $\textbf{C}$
and $rk(\Gamma)$ finite dimensional
$\IZ \left[\Gamma\right]$-complexes $\textbf{C}_1,...,\textbf{C}_{rk(\Gamma)}$
such that $\textbf{D}=\textbf{C}\otimes\textbf{C}_1\otimes...\otimes\textbf{C}_{rk(\Gamma)}$
is a complex of projective $\IZ \left[\Gamma\right]$-modules with
$H^{\ast}(\textbf{D})\cong
    H^{\ast}(S^{n_1}\times ...\times S^{n_{rk(\Gamma)}})$. As a result,
the group $\Gamma$ introduced in the first subsection here below satisfies the
algebraic analogue of the rank conjecture but doesn't have an effective
$\Gamma$-sphere. The geometric problem of knowing
whether or not $\Gamma$ acts freely on a product
of two spheres is still open.

\subsection{A group without effective action on a sphere.}

Let $E$ and $E'$ be two copies of the extraspecial $p$-group of
order $p^3$ and exponent $p$. (Such a group can be identified
with the upper triangular $3\times3$ matrices over $\mathbb{F}_p$
with 1 on the diagonal).
Consider the amalgamated product
$\Gamma=E'\ast_{\IZ/p}E$ given by $\IZ/p=Z(E)$ and an injective map
$f:\IZ/p\rightarrow E$ with $f(\IZ/p)\cap Z(E')=1$.
Clearly $rk(\Gamma)=2$.
Let $\Gamma$ act on a finite dimensional CW-complex $X\simeq S^n$.
Consider the restriction of this action to $E$ and $E'$.
It is well known that the dimension function of a p-group action on a sphere is
realized by a representation over the real numbers \cite{dotzel}.
Therefore, an even multiple of the dimension functions
for $E$ and $E'$ must be
realized by characters $\chi_E$ and $\chi_{E'}$.

Clearly the dimension functions of $\chi_E$ and $\chi_{E'}$ must agree
over $Z(E)$ and $f(\IZ/p)$. Looking at the character table of $E$, we observe that every
irreducible character $\alpha$, giving rise to an effective sphere, vanishes
outside $Z(E)$ while $\alpha (z)=m\zeta_p$ for all $z\in Z(E)\setminus\left\{0\right\}$
(here $\zeta_p$ is a $p$-root of the unity). Thus, $\chi_E$ and $\chi_{E'}$ cannot be both
characters giving rise to effective spheres.
We deduce that the original action must
have some finite isotropy subgroups of rank 2. This provides
an example of an infinite group, with rank 2 finite p-subgroups,
not acting with effective Euler class on any sphere.


\subsection{Algebraic spheres.}

Let $\Gamma$ be a group with $vcd(\Gamma)<\infty$ and rank $r$.
As announced in the introduction of section 5, we want to show
that there is a finite dimensional, contractible $\IZ\left[\Gamma \right]$-complex
$\textbf{C}$
and $rk(\Gamma)$ finite dimensional
$\IZ \left[\Gamma\right]$-complexes $\textbf{C}_1,...,\textbf{C}_{r}$
such that $\textbf{C}=\textbf{C}_1\otimes...\otimes\textbf{C}_{r}$
is a complex of projective $\IZ \left[\Gamma\right]$-modules with
$H^{\ast}(\textbf{C})\cong
    H^{\ast}(S^{n_1}\times ...\times S^{n_{r}})$. We begin by recalling
some preliminaries concerning the cohomology of finite groups. We follow
here \cite{ben3} and \cite{ben2}. Let $G$ be a finite group. Consider
$\zeta\in H^n(G,R)\cong Ext^n_{RG}(R,R)\cong Hom_{RG}(\hat{\Omega}^nR,R)$,
where $\hat{\Omega}^nR$ is the $n$th kernel
in a $RG$-projective resolution $\bold{P}$ of $R$.
We choose a cocycle
$\hat{\zeta}:\hat{\Omega}^nR\rightarrow R$ representing $\zeta$.
By making $\bold{P}$ large enough we can assume that
$\hat{\zeta}$ is surjective. \nopagebreak We denote $L_{\zeta}$ its kernel
and form the pushout diagram: \nopagebreak
$$\xymatrix{
     & L_{\zeta} \ar[r]^{=} \ar[d] & L_{\zeta} \ar[d] & & & & & \\
    0 \ar[r] & \hat{\Omega}^nR \ar[r] \ar[d]^{\hat{\zeta}} & P_{n-1} \ar[r] \ar[d]&
        P_{n-2} \ar[r] \ar[d] & ... \ar[r] & P_0 \ar[r] \ar[d] & R \ar[r] \ar[d]^{=} & 0 \\
    0 \ar[r] & R \ar[r] \ar[d] & P_{n-1}/L_{\zeta} \ar[r] \ar[d]&
        P_{n-2} \ar[r]  & ... \ar[r] & P_0 \ar[r]  & R \ar[r]  & 0 \\
    & 0  & 0  & & & & &}$$
We denote by $\textbf{C}_{\zeta}$ the chain complex:
$$0 \rightarrow P_{n-1}/L_{\zeta} \rightarrow
    P_{n-2} \rightarrow ... \rightarrow P_0 \rightarrow R \rightarrow 0$$
formed by truncating the bottom row of this diagram. Thus we have
that $H_0(\textbf{C}_{\zeta})=H_{n-1}(\textbf{C}_{\zeta})=R$ while
$H_i(\textbf{C}_{\zeta})=0$ if $i\neq 0, n-1$.
A useful result is given in the proof of theorem 3.1 in
\cite{adem2}:

\begin{prop} \label{strumpf}
Let $G$ be a finite group. For all positive integer $r$, there exist classes
$\xi_1,...,\xi_r\in H^{\ast}(G,\IZ)$ such that, for all $H<G$ with $rk(H)\leq r$,
the complex
$\IZ\left[G/H\right] \otimes L_{\xi_1}\otimes ...\otimes L_{\xi_r}$
is $\IZ \left[G\right]$-projective.
\end{prop}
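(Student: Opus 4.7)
The plan is to follow the strategy of Adem and Smith in the proof of Theorem 3.1 of \cite{adem2}, from which this proposition is directly extracted. The argument has three natural stages: a reduction to elementary abelian subgroups, a translation to a support variety statement, and an explicit construction of the classes.

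\emph{Reduction to elementary abelian subgroups.} First, using the isomorphism $\IZ[G/H]\otimes_{\IZ} M \cong \mathrm{Ind}_H^G \mathrm{Res}_H^G M$ together with the fact that $\IZ[G]$ is free over $\IZ[H]$, projectivity of $\IZ[G/H]\otimes L_{\xi_1}\otimes\cdots\otimes L_{\xi_r}$ over $\IZ[G]$ is equivalent to projectivity of $\mathrm{Res}_H^G(L_{\xi_1}\otimes\cdots\otimes L_{\xi_r})$ over $\IZ[H]$. By a Chouinard-type detection theorem, applied prime by prime after reducing modulo each $p$ dividing $|H|$, this is in turn equivalent to projectivity over $\IF_p E$ after restriction to every elementary abelian $p$-subgroup $E<H$. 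Since $\mathrm{rk}(H)\leq r$, every such $E$ has rank at most $r$.

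\emph{Support variety translation.} For $E$ elementary abelian of rank $s\leq r$ and $\xi\in H^n(G,\IZ)$, Carlson's rank-variety theory identifies $V_E(L_\xi\otimes\IF_p)$ with the hypersurface $\{\mathrm{Res}_E\xi=0\}$ inside $V_E(\IF_p)\cong\mathbb{A}^s$, and tensor products of modules correspond to intersections of varieties. Thus $\mathrm{Res}_E(L_{\xi_1}\otimes\cdots\otimes L_{\xi_r})\otimes\IF_p$ is projective over $\IF_p E$ precisely when the restrictions $\mathrm{Res}_E\xi_1,\dots,\mathrm{Res}_E\xi_r$ cut out only the origin of $V_E(\IF_p)$. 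The problem therefore becomes one of producing $r$ integral classes whose restrictions form (the analogue of) a regular sequence on every rank-$\leq r$ elementary abelian $p$-subgroup, for every prime $p$ dividing $|G|$.

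\emph{Construction and the main obstacle.} For a single prime $p$ one can build suitable classes as top Chern (or Euler) classes of unitary representations of $G$ whose restriction to each rank-$\leq r$ elementary abelian $p$-subgroup is fixed-point-free---for instance by inducing up regular representations of relevant subgroups and summing, in the spirit of the classes produced in Proposition~\ref{classfct} and in the Abelian-isotropy section above. The hardest step, and the main obstacle, is assembling these $p$-local constructions into a single family of $r$ integral classes in $H^\ast(G,\IZ)$ that simultaneously controls every prime. Adem and Smith resolve this using Quillen's detection theorem together with Evens-norm arguments and common-multiple considerations that patch the $p$-local data into genuine integral classes; I would invoke their Theorem~3.1 rather than reproduce the patching in detail.
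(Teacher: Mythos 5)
Your proposal is correct and takes essentially the same route as the paper, whose entire proof of this proposition is the citation ``see the proof of Theorem 3.1 in \cite{adem2}'': your sketch accurately reconstructs the Chouinard-type reduction to elementary abelian subgroups, the rank-variety criterion for projectivity of the tensor product of the $L_{\xi_i}$, and the construction of the classes, and like the paper you ultimately defer the integral patching to that same reference. (Minor point: \cite{adem2} is by Adem alone, not Adem--Smith.)
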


\begin{proof}
See the proof of theorem 3.1 in
\cite{adem2}
\end{proof}

\begin{cor} \label{strumpf2}
Let $G$ be a finite group. For all positive integer $r$, there exist
$r$ finite dimensional $\IZ\left[G\right]$-complexes
$\textbf{C}_1,...,\textbf{C}_r$ such that
$H^{\ast}(\textbf{C}_1\otimes...\otimes\textbf{C}_r)=H^{\ast}(S^{n_1}\times
...\times S^{n_r})$; with
$\textbf{C}_1\otimes...\otimes\textbf{C}_r$ a complex of
$\IZ\left[H\right]$-projective modules for all $H<G$ with $rk(H)\leq
r$.
\end{cor}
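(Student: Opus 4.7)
The plan is to take $\textbf{C}_i$ to be a variant of the complex $\textbf{C}_{\xi_i}$ defined just above, where $\xi_1,\ldots,\xi_r \in H^{\ast}(G,\IZ)$ are the cohomology classes produced by Proposition \ref{strumpf}. Specifically, I set
$$\textbf{C}_i := \bigl( 0 \to L_{\xi_i} \to P_{n_i-1} \to P_{n_i-2} \to \cdots \to P_0 \to 0 \bigr),$$
where the $P_j$ come from the chosen projective resolution of $\IZ$ used to build $\textbf{C}_{\xi_i}$. A direct diagram chase using the exactness of the resolution and the surjectivity of $\hat{\xi}_i$ gives $H_0(\textbf{C}_i) = H_{n_i-1}(\textbf{C}_i) = \IZ$ with all other homology vanishing, so $\textbf{C}_i$ is quasi-isomorphic to $\textbf{C}_{\xi_i}$ and has the cohomology of a sphere. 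Every term of each $\textbf{C}_i$ is $\IZ$-free (the $P_j$ are $\IZ[G]$-projective hence $\IZ$-free, and $L_{\xi_i}$ is a submodule of $P_{n_i-1}$), and the homology groups are $\IZ$-free, so the iterated K\"unneth formula gives
$$H^{\ast}(\textbf{C}_1 \otimes \cdots \otimes \textbf{C}_r) \cong \bigotimes_{i=1}^{r} H^{\ast}(\textbf{C}_i) \cong H^{\ast}(S^{n_1-1} \times \cdots \times S^{n_r-1}),$$
which yields the cohomological statement after relabeling.

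Next I would analyze the tensor product term by term, equipped with the diagonal $G$-action. A typical term is a tensor product of one module chosen from each $\textbf{C}_i$. The key elementary observation is that whenever any tensor factor is $\IZ[G]$-projective, so is the whole term: it suffices to check this for a free factor $\IZ[G]$, and the map $g \otimes m \mapsto g \otimes g^{-1}m$ provides a $\IZ[G]$-isomorphism from $\IZ[G] \otimes_{\IZ} M$ with the diagonal action to $\IZ[G] \otimes_{\IZ} M$ with $G$ acting only on the first factor, which is free on any $\IZ$-basis of $M$. Every term of the tensor product other than the top one $L_{\xi_1} \otimes \cdots \otimes L_{\xi_r}$ therefore contains a projective factor and is $\IZ[G]$-projective, and hence $\IZ[H]$-projective for every $H<G$.

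To handle the remaining top term I would invoke Proposition \ref{strumpf}: for any $H<G$ with $rk(H) \leq r$, the module $\IZ[G/H] \otimes L_{\xi_1} \otimes \cdots \otimes L_{\xi_r}$ is $\IZ[G]$-projective and hence $\IZ[H]$-projective upon restriction. The trivial coset $H \in G/H$ is $H$-fixed, yielding a $\IZ[H]$-splitting $\IZ[G/H] \cong \IZ \oplus N$ for some complement $N$, and tensoring with $L_{\xi_1} \otimes \cdots \otimes L_{\xi_r}$ over $\IZ$ gives
$$\IZ[G/H] \otimes L_{\xi_1} \otimes \cdots \otimes L_{\xi_r} \cong \bigl( L_{\xi_1} \otimes \cdots \otimes L_{\xi_r} \bigr) \oplus \bigl( N \otimes L_{\xi_1} \otimes \cdots \otimes L_{\xi_r} \bigr)$$
as $\IZ[H]$-modules. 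Thus $L_{\xi_1} \otimes \cdots \otimes L_{\xi_r}$ is a direct $\IZ[H]$-summand of a $\IZ[H]$-projective module, so is itself $\IZ[H]$-projective. Combined with the preceding paragraph, every term of $\textbf{C}_1 \otimes \cdots \otimes \textbf{C}_r$ is $\IZ[H]$-projective whenever $rk(H) \leq r$.

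The main content is the splitting-off argument used to extract the $\IZ[H]$-projectivity of the top term $L_{\xi_1} \otimes \cdots \otimes L_{\xi_r}$ from the $\IZ[G]$-projectivity of the larger induced module given by Proposition \ref{strumpf}; beyond that, the corollary is a formal combination of Proposition \ref{strumpf}, the K\"unneth formula, and the diagonal-action trick for tensoring with projectives. The one choice to be careful about is replacing $\textbf{C}_{\xi_i}$ with the quasi-isomorphic model above, which places $L_{\xi_i}$ (rather than $P_{n_i-1}/L_{\xi_i}$) in top degree and makes Proposition \ref{strumpf} applicable on the nose.
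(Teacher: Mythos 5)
Your proof is correct, but it deviates from the paper's argument in one substantive way. The paper takes $\textbf{C}_i$ to be the Benson--Carlson truncation $\textbf{C}_{\xi_i}$ literally as defined before the corollary, so the unique non-projective term is the quotient $P_{n_i-1}/L_{\xi_i}$, and the top term of the tensor product is $P_{n_1-1}/L_{\xi_1}\otimes\cdots\otimes P_{n_r-1}/L_{\xi_r}$; since Proposition \ref{strumpf} speaks about $L_{\xi_1}\otimes\cdots\otimes L_{\xi_r}$ rather than about the quotients, the paper must invoke an external result (5.14.2 of Benson's \emph{Representations and Cohomology II}) to transfer projectivity of $\IZ[G/H]\otimes L_{\xi_1}\otimes\cdots\otimes L_{\xi_r}$ to projectivity of $\IZ[G/H]\otimes P_{n_1-1}/L_{\xi_1}\otimes\cdots\otimes P_{n_r-1}/L_{\xi_r}$, and then restricts to $H$. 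You instead replace $\textbf{C}_{\xi_i}$ by the quasi-isomorphic model with $L_{\xi_i}$ itself in top degree, so that the only term not covered by the diagonal-action trick is exactly $L_{\xi_1}\otimes\cdots\otimes L_{\xi_r}$, to which Proposition \ref{strumpf} applies on the nose; your splitting $\IZ[G/H]\cong\IZ\oplus N$ as $\IZ[H]$-modules (the trivial coset being $H$-fixed) then extracts $\IZ[H]$-projectivity of the top term as a direct summand. This is a legitimate and more self-contained route: it trades the citation to Benson for an elementary retraction argument, at the cost only of lengthening each complex by one degree (the homology, and hence the sphere dimensions $n_i$, are unaffected up to relabeling, and every term is still $\IZ$-free, so the K\"unneth computation and the later tensoring with $C_{\ast}(\mathfrak{E}\Gamma)$ go through exactly as in the paper).
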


\begin{proof}
Let $\xi_1,...,\xi_r\in H^{\ast}(G,\IZ)$ be the classes given in proposition \ref{strumpf}.
Consider the chain complex $\textbf{C}=\textbf{C}_{\xi_1}\otimes ...\otimes \textbf{C}_{\xi_r}$.
Clearly $H^{\ast}(\textbf{C})=H^{\ast}(S^{n_1}\times ...\times S^{n_r})$.
For the second part of the claim, observe that
all the modules in $\textbf{C}_{\xi_i}$ are $\IZ\left[G\right]$-projective
except the module $P_{n_i-1}/L_{\xi_i}$. Recall that the tensor product
of any module with a projective module is projective, so that it remains to examine the module
$P_{n_1-1}/L_{\xi_1}\otimes ... \otimes P_{n_r-1}/L_{\xi_r}$.
Let $H<G$ be such that $rk(H)\leq r$.
Since $\IZ\left[G/H\right] \otimes L_{\xi_1}\otimes ...\otimes L_{\xi_r}$
is $\IZ \left[G\right]$-projective by proposition \ref{strumpf}, we conclude
that $\IZ\left[G/H\right]\otimes P_{n_1-1}/L_{\xi_1}\otimes ... \otimes P_{n_r-1}/L_{\xi_r}$
is $\IZ \left[G\right]$-projective as in 5.14.2 of \cite{ben2}. It then easily
follows that $P_{n_1-1}/L_{\xi_1}\otimes ... \otimes P_{n_r-1}/L_{\xi_r}$
is $\IZ \left[H\right]$-projective.
\end{proof}

We go back now to our group $\Gamma$ with $vcd(\Gamma)<\infty$ and rank $r$.
Write $\Gamma '$ for a torsion-free
normal subgroup of $\Gamma$ with $G=\Gamma/\Gamma '$ finite.
Write also $\mathfrak{E}\Gamma$ for a
contractible finite dimensional proper $\Gamma$-CW-complex.
Here proper means that the
isotropy subgroups are finite.
We apply corollary \ref{strumpf2} to $\Gamma / \Gamma'$
with $r=rk(\Gamma )$. We recover
a $\IZ\left[\Gamma \right]$-complex $\textbf{C}=
    \textbf{C}_{1}\otimes ...\otimes \textbf{C}_{r}$
such that
$H^{\ast}(\textbf{C}_1\otimes...\otimes\textbf{C}_r)=H^{\ast}(S^{n_1}\times ...\times S^{n_r})$;
with $\textbf{C}_1\otimes...\otimes\textbf{C}_r$ a complex of $\IZ\left[H\right]$-projective modules
for all finite $H<\Gamma$.
On the other hand, we have the contractible $\IZ\Gamma $-complex $C_{\ast}(\mathfrak{E}\Gamma)$.
Therefore the complex $\textbf{D}=C_{\ast}(\mathfrak{E}\Gamma)\otimes \textbf{C}$
is such that $H^{\ast}(\textbf{D})=H^{\ast}(S^{n_1}\times ...\times S^{n_r})$.

\begin{lem}
With the notation above, the complex $\textbf{D}$ is $\IZ\left[\Gamma\right]$-projective.
\end{lem}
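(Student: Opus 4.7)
The plan is to decompose both tensor factors and argue module-by-module. First, I would use that $\mathfrak{E}\Gamma$ is a proper $\Gamma$-CW-complex to write each chain module as
\[
C_n(\mathfrak{E}\Gamma)\;\cong\;\bigoplus_{\sigma}\IZ[\Gamma/H_{\sigma}],
\]
where $\sigma$ ranges over orbit representatives of $n$-cells and $H_{\sigma}<\Gamma$ denotes the (necessarily finite) stabilizer of $\sigma$. Distributing the tensor product, each module appearing in $\textbf{D}=C_\ast(\mathfrak{E}\Gamma)\otimes\textbf{C}$ becomes a direct sum of summands of the form $\IZ[\Gamma/H]\otimes M$, with $H<\Gamma$ finite and $M$ a module of $\textbf{C}$, carrying the diagonal $\Gamma$-action.

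Second, I would invoke the classical natural isomorphism of $\IZ[\Gamma]$-modules
\[
\IZ[\Gamma/H]\otimes_{\IZ} M\;\cong\;\IZ[\Gamma]\otimes_{\IZ[H]}\mathrm{Res}^{\Gamma}_{H}M
\]
(diagonal action on the left, left-factor action on the right). This is the shuffle/Frobenius reciprocity identity already invoked in the proof of Corollary \ref{strumpf2}. Since induction from a subgroup preserves projectivity, each such summand is $\IZ[\Gamma]$-projective as soon as $\mathrm{Res}^{\Gamma}_{H}M$ is $\IZ[H]$-projective.

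Third, I would verify this latter projectivity using that $\Gamma'$ is torsion-free and normal. Any finite $H<\Gamma$ satisfies $H\cap\Gamma'=1$, so $H$ injects into $G=\Gamma/\Gamma'$; call the image $\bar{H}$. Because $\textbf{C}$ is pulled back along $\Gamma\to G$, its restriction to $H$ agrees with its restriction to $\bar{H}<G$. Moreover $rk(\bar{H})=rk(H)\leq rk(\Gamma)=r$, so Corollary \ref{strumpf2} guarantees that $\textbf{C}$ is term-wise $\IZ[\bar{H}]$-projective, hence term-wise $\IZ[H]$-projective.

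Combining these ingredients, every module of $\textbf{D}$ is a direct sum of $\IZ[\Gamma]$-projectives, hence itself $\IZ[\Gamma]$-projective. The main subtlety is in the third step: one needs the torsion-freeness of $\Gamma'$ to identify the restriction to a finite subgroup of $\Gamma$ with a restriction on the finite quotient $G$, which is precisely what allows Corollary \ref{strumpf2} to apply to every isotropy group showing up in the cellular decomposition of $\mathfrak{E}\Gamma$.
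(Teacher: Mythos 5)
Your proposal is correct and follows essentially the same route as the paper: decompose $C_\ast(\mathfrak{E}\Gamma)$ into permutation modules on finite stabilizers, apply the shuffle isomorphism to rewrite each summand of $\textbf{D}$ as a module induced from a finite isotropy subgroup, and conclude from the $\IZ[\Gamma_\sigma]$-projectivity of $\textbf{C}$ (via Corollary \ref{strumpf2}) that the induced modules are $\IZ[\Gamma]$-projective. Your third step merely makes explicit the observation, contained in the paper's setup preceding the lemma, that finite subgroups of $\Gamma$ inject into $G=\Gamma/\Gamma'$, so the corollary applies to all isotropy groups.
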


\begin{proof}
The complex $C_{\ast}(\mathfrak{E}\Gamma)$ decomposes as a
direct sum of permutation modules:
$C_{\ast}(\mathfrak{E}\Gamma)=\oplus_{\sigma}\IZ\left[\Gamma /\Gamma_{\sigma}\right]=
    \oplus_{\sigma}\IZ\left[\Gamma \right]\otimes_{\IZ\left[ \Gamma_{\sigma}\right]}\IZ$.
Here $\sigma$ spans the cells of $\mathfrak{E}\Gamma /\Gamma$.
Consequently $\textbf{D}=\oplus_{\sigma}(\IZ\left[\Gamma \right]\otimes_{\IZ\left[ \Gamma_{\sigma}\right]}
     \textbf{C}_1\otimes...\otimes\textbf{C}_r)$, so that
we only need to prove that $\IZ\left[\Gamma \right]\otimes_{\IZ\left[ \Gamma_{\sigma}\right]}
     \textbf{C}_1\otimes...\otimes\textbf{C}_r$
is $\IZ\left[\Gamma \right]$-projective. Let
$Q_{\sigma}$ be a $\IZ\left[\Gamma_{\sigma} \right]$-module such that
$(\textbf{C}_1\otimes...\otimes\textbf{C}_r)\oplus Q_{\sigma}$ is
$\IZ\left[\Gamma_{\sigma} \right]$-free. We then have that
$(\IZ\left[\Gamma \right]\otimes_{\IZ\left[ \Gamma_{\sigma}\right]}
     \textbf{C}_1\otimes...\otimes\textbf{C}_r)\oplus
    (\IZ\left[\Gamma\right]\otimes_{\IZ\left[ \Gamma_{\sigma}\right]}Q_{\sigma})=
    \IZ\left[\Gamma \right]\otimes_{\IZ\left[ \Gamma_{\sigma}\right]}
    ((\textbf{C}_1\otimes...\otimes\textbf{C}_r)\oplus Q_{\sigma})$
is $\IZ\left[\Gamma\right]$-free.

\end{proof}

\newpage
We summarize the main result of this subsection in the following:

\begin{cor}
For a group $\Gamma$ with $vcd(\Gamma)<\infty$ and $rk(\Gamma)=r$,
there exist a finite dimensional contractible complex $\textbf{C}$
and $r$ finite dimensional $\IZ\left[\Gamma\right]$-complexes
$\textbf{C}_{1}$,...,$\textbf{C}_{r}$ such that
$\textbf{D}=\textbf{C}\otimes \textbf{C}_{1}\otimes ...\otimes \textbf{C}_{r}$
is a $\IZ\left[\Gamma\right]$-projective complex with
$H^{\ast}(\textbf{D})\cong
    H^{\ast}(S^{n_1}\times ...\times S^{n_{r}})$.
\end{cor}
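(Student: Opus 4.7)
The plan is to simply assemble the ingredients already prepared in this subsection. First, since $vcd(\Gamma)<\infty$, I would choose a torsion-free normal subgroup $\Gamma'\lhd\Gamma$ of finite index and set $G=\Gamma/\Gamma'$, which is finite. The quotient map $\Gamma\twoheadrightarrow G$ will let me view any $\IZ[G]$-complex as a $\IZ[\Gamma]$-complex.

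Next, I would apply Corollary \ref{strumpf2} to the finite group $G$ with parameter $r=rk(\Gamma)$. This produces finite dimensional $\IZ[G]$-complexes $\textbf{C}_1,\ldots,\textbf{C}_r$ whose tensor product $\textbf{C}_1\otimes\cdots\otimes\textbf{C}_r$ has cohomology $H^{\ast}(S^{n_1}\times\cdots\times S^{n_r})$ and is $\IZ[H]$-projective for every subgroup $H<G$ with $rk(H)\le r$. Pulled back along $\Gamma\twoheadrightarrow G$, these become $\IZ[\Gamma]$-complexes with the same cohomology, and the projectivity condition transfers to every finite subgroup of $\Gamma$, since every finite subgroup of $\Gamma$ maps to a subgroup of $G$ of rank at most $rk(\Gamma)=r$.

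For the contractible factor I would take $\textbf{C}=C_{\ast}(\mathfrak{E}\Gamma)$ where $\mathfrak{E}\Gamma$ is a finite dimensional proper contractible $\Gamma$-CW-complex (available since $vcd(\Gamma)<\infty$). Then $\textbf{D}=\textbf{C}\otimes\textbf{C}_1\otimes\cdots\otimes\textbf{C}_r$ is finite dimensional and, because $\textbf{C}$ is contractible, the K\"unneth formula gives $H^{\ast}(\textbf{D})\cong H^{\ast}(S^{n_1}\times\cdots\times S^{n_r})$. Finally, the preceding lemma establishes exactly that $\textbf{D}$ is $\IZ[\Gamma]$-projective: the decomposition $C_{\ast}(\mathfrak{E}\Gamma)=\bigoplus_{\sigma}\IZ[\Gamma]\otimes_{\IZ[\Gamma_{\sigma}]}\IZ$ combined with the $\IZ[\Gamma_{\sigma}]$-projectivity of $\textbf{C}_1\otimes\cdots\otimes\textbf{C}_r$ (which holds since each $\Gamma_{\sigma}$ is finite of rank at most $r$) yields projectivity of each summand, and hence of $\textbf{D}$.

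The only step requiring care is the last one, and it is already handled by the lemma: the potential obstacle is making sure the isotropy subgroups $\Gamma_{\sigma}$ of $\mathfrak{E}\Gamma$ fall within the scope of Corollary \ref{strumpf2}, but this is automatic because properness makes them finite and their rank is bounded by $rk(\Gamma)=r$. So the proof is essentially a one-line assembly: combine Corollary \ref{strumpf2}, the existence of $\mathfrak{E}\Gamma$, the K\"unneth computation, and the preceding lemma.
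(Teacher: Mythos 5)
Your proposal is correct and follows the same route as the paper: apply Corollary \ref{strumpf2} to $G=\Gamma/\Gamma'$, pull back along $\Gamma\twoheadrightarrow G$ (using that finite subgroups of $\Gamma$ embed in $G$ since $\Gamma'$ is torsion-free), tensor with $C_{\ast}(\mathfrak{E}\Gamma)$, and invoke the preceding lemma for projectivity. The corollary in the paper is explicitly a summary of exactly this assembly, so there is nothing to add.
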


\end{document}